\documentclass[11pt, oneside]{amsart}   	
\usepackage{geometry}                		
\geometry{letterpaper}                   		
\usepackage[parfill]{parskip}    		
\usepackage{graphicx}
\usepackage{amssymb,amsmath}
\newtheorem{theorem}{Theorem}
\newtheorem{lemma}[theorem]{Lemma}

\theoremstyle{definition}
\newtheorem{remark}[theorem]{Remark}
\newtheorem{example}[theorem]{Example}
\newtheorem{problem}[theorem]{Problem}
\newtheorem{definition}[theorem]{Definition}
\usepackage{epstopdf}
\numberwithin{equation}{section}
\numberwithin{theorem}{section}

\author[N.~Zorii]{Natalia Zorii}
\address{Institute of Mathematics of
National Academy of Sciences of Ukraine, Tereshchenkivska 3, 01601,
Kyiv-4, Ukraine}
\email{natalia.zorii@gmail.com}

\thanks{The author expresses her gratitude to Erwin Schr\"{o}dinger International Institute for providing  conducive research atmosphere during her stay when part of this manuscript was written.}

\begin{document}

\title[Gauss variational problem for condensers with touching plates]{Constrained Gauss variational problem\\ for condensers with touching plates}


\begin{abstract}
We study a constrained minimum energy problem with an external field relative to the $\alpha$-Riesz kernel $|x-y|^{\alpha-n}$ of an arbitrary order $\alpha\in(0,n)$ for a generalized condenser $\mathbf A=(A_1,A_2)$ with touching oppositely-charged plates in~$\mathbb R^n$, $n\geqslant2$. Conditions sufficient for the solvability of the problem are obtained. Our arguments are mainly based on the definition of an appropriate metric structure on a set of vector measures associated with~$\mathbf A$ and the establishment of a completeness theorem for the corresponding metric space.
\end{abstract}

\maketitle

\section{Introduction}This paper is devoted to the well-known Gauss variational problem of minimizing the $\alpha$-Riesz energy, $\alpha\in(0,n)$, in the presence of an external field, treated for a generalized condenser~$\mathbf A$ with touching oppositely-charged plates $A_1,A_2\subset\mathbb R^n$, $n\geqslant2$. In the case where the Euclidean distance ${\rm dist}(A_1,A_2)$ between~$A_1$ and~$A_2$ is nonzero (which might happen if $A_1$ and~$A_2$ touch each other {\it only\/} at the Alexandroff point~$\omega_{\mathbb R^n}$), a fairly complete investigation of this problem has been provided in~\cite{ZPot2,ZPot3} (see also the bibliography therein; see~Section~\ref{sec-Gauss} below for a short review).

However,  the results obtained in~\cite{ZPot2,ZPot3} and the approach developed are no longer valid if ${\rm dist}(A_1,A_2)=0$ (e.g, if $A_1$ and~$A_2$ touch each other at a {\it finite\/} point $x\in\mathbb R^n$). Then the infimum of the Gauss functional can not, in general, be attained among the admissible measures. Using the electrostatic interpretation, which is possible for the Coulomb kernel $|x-y|^{-1}$ on~$\mathbb R^3$, a short-circuit between~$A_1$ and~$A_2$ might occur. Therefore, it is meaningful to ask what kind of additional requirements on the charges (measures) under consideration would prevent this phenomenon.

A natural idea, to be exploited below, is to impose an upper constraint on vector measures associated with~$\mathbf A$ so that the infimum of the Gauss functional over the corresponding (narrower) class of constrained admissible vector measures would be already an actual minimum. See~Section~\ref{sec-Gauss-c} for a precise formulation of the constrained problem; as for the history of the question, cf.~Remarks~\ref{rem-3}--\ref{rem-5}.

A statement on the solvability of the constrained Gauss variational problem is given by Theorem~\ref{th-main}, the main result of the study. Its proof is based on the definition of an appropriate metric structure on a set of vector measures associated with~$\mathbf A$ and the establishment of a completeness theorem for the corresponding metric space (see~Theorem~\ref{complete}). The results obtained are illustrated by Example~\ref{ex-2}.

\section{Preliminaries}

Let $\mathrm X$ be a locally compact Hausdorff space, to be
specified below, and $\mathfrak M(\mathrm X)$ the linear
space of all real-valued scalar Radon measures~$\mu$ on~$\mathrm X$, equipped with the
{\it vague\/} topology, i.e.~the topology of
pointwise convergence on the class $\mathrm C_0(\mathrm X)$ of all real-valued
continuous functions on~$\mathrm X$ with compact
support. We denote by~$\mu^+$ and~$\mu^-$ the positive and the negative parts in the Hahn--Jordan decomposition of a measure $\mu\in\mathfrak M(\mathrm X)$, respectively, and by $S^\mu_{\mathrm X}$ its support. These and other notions of the theory of measures and integration in a locally compact space, to be used throughout the paper, can be found in~\cite{B2,E2} (see also~\cite{F1} for a short review).

A {\it
kernel\/} $\kappa(x,y)$ on $\mathrm X$ is a symmetric, lower
semicontinuous function $\kappa:\mathrm X\times\mathrm
X\to[0,\infty]$. Given $\mu,\mu_1\in\mathfrak M(\mathrm X)$, let
$E_\kappa(\mu,\mu_1)$ and $U_\kappa^\mu(\cdot)$ denote the {\it mutual
energy\/} and the {\it potential\/} relative to the kernel~$\kappa$,
respectively, i.e.
\begin{align*}
E_\kappa(\mu,\mu_1)&:=\int\kappa(x,y)\,d(\mu\otimes\mu_1)(x,y),\\
U_\kappa^\mu(x)&:=\int\kappa(x,y)\,d\mu(y),\quad x\in\mathrm X.
\end{align*}
(When introducing notation, we assume
the corresponding object on the right to be well defined --- as a finite number or $\pm\infty$.)

For $\mu=\mu_1$, the mutual energy $E_\kappa(\mu,\mu_1)$ defines the
{\it energy\/} $E_\kappa(\mu):=E_\kappa(\mu,\mu)$. Let $\mathcal E_\kappa(\mathrm X)$ consist
of all $\mu\in\mathfrak M(\mathrm X)$ whose energy $E_\kappa(\mu)$ is finite.

 Having denoted by $\mathfrak M^+(\mathrm X)$ the convex cone of all nonnegative $\mu\in\mathfrak M(\mathrm X)$, we write $\mathcal E^+_\kappa(\mathrm X):=\mathfrak M^+(\mathrm X)\cap\mathcal E_\kappa(\mathrm X)$. Given a set $B\subset\mathrm X$, $B\ne\mathrm X$, let $\mathfrak M^+(B;\mathrm X)$ consist of all $\mu\in\mathfrak M^+(\mathrm X)$ concentrated in~$B$, and let $\mathcal E^+_\kappa(B;\mathrm X):=\mathcal E_\kappa(\mathrm X)\cap\mathfrak M^+(B;\mathrm X)$.

 Observe that, if $B$ is closed, then $\mu\in\mathfrak M^+(\mathrm X)$ belongs to~$\mathfrak M^+(B;\mathrm X)$ if and only if the set $\mathrm X\setminus B$ is $\mu$-negligible (or, equivalently, if $S^\mu_{\mathrm X}\subset B$). Furthermore, then $\mathfrak M^+(B;\mathrm X)$ and $\mathcal E^+_\kappa(B;\mathrm X)$ are closed in the induced vague topology (see, e.g.,~\cite{F1}).

Let~$C_\kappa(B)$ be the {\it interior capacity\/} of~$B$
relative to the kernel~$\kappa$, given by
\begin{equation*}\label{cap-def}C_\kappa(B):=\bigl[\inf_{\mu\in\mathcal
E_\kappa^+(B;\mathrm X): \ \mu(B)=1}\,E_\kappa(\mu)\bigr]^{-1};\end{equation*}
see, e.g., \cite{F1,O}. Then $0\leqslant C_\kappa(B)\leqslant\infty$. (Here, as usual, the
infimum over the empty set is taken to be~$+\infty$. We also put
$1\bigl/(+\infty)=0$ and $1\bigl/0=+\infty$.)

A kernel~$\kappa$ is called {\it strictly positive definite\/} if the energy $E_\kappa(\mu)$, $\mu\in\mathfrak M(\mathrm X)$, is
nonnegative whenever defined and $E_\kappa(\mu)=0$ implies $\mu=0$.
Then $\mathcal E_\kappa(\mathrm X)$ forms a pre-Hil\-bert space with the
scalar product $E_\kappa(\mu,\mu_1)$ and the norm
$\|\mu\|_\kappa:=\sqrt{E_\kappa(\mu)}$ (see~\cite{F1}). The topology
on~$\mathcal E_\kappa(\mathrm X)$ defined by~$\|\cdot\|_\kappa$ is said to be {\it
strong\/}.

Following Fuglede~\cite{F1}, we call a strictly positive definite
kernel~$\kappa$ {\it perfect\/} if any strong Cauchy sequence in~$\mathcal E_\kappa^+(\mathrm X)$ converges strongly and, in addition, the strong topology on~$\mathcal E_\kappa^+(\mathrm X)$ is finer than the induced vague topology on~$\mathcal E_\kappa^+(\mathrm X)$. Note that then $\mathcal E_\kappa^+(\mathrm X)$ is a strongly complete metric space.

\section{Unconstrained and constrained Gauss variational problems}

Throughout the paper, let $n\geqslant2$, $n\in\mathbb N$, and $\alpha\in(0,n)$ be fixed. In $\mathrm X=\mathbb R^n$, consider the $\alpha$-{\it Riesz kernel\/} $\kappa_\alpha(x,y):=|x-y|^{\alpha-n}$ of order~$\alpha$, where $|x-y|$ denotes the Euclidean distance between~$x$ and~$y$ in~$\mathbb R^n$.
The $\alpha$-Riesz kernel is known to be strictly positive definite and, moreover, perfect
(see~\cite{D1,D2}); hence, the metric space $\mathcal E^+_{\kappa_\alpha}(\mathbb R^n)$ is complete in the induced strong topology. However, by
Cartan~\cite{Car} (see also \cite[Theorem~1.19]{L}), the whole pre-Hilbert space~$\mathcal
E_{\kappa_\alpha}(\mathbb R^n)$ for $\alpha\in(1,n)$ is strongly incomplete (compare with Theorem~\ref{complete} and Remark~\ref{rem-str} below).

From now on we shall write simply $\alpha$ instead
of~$\kappa_\alpha$ if it serves as an index. E.g.,
$C_\alpha(\cdot)=C_{\kappa_\alpha}(\cdot)$ denotes the
$\alpha$-Riesz interior capacity of a set. An expression $\mathcal U(x)$, involving a variable point $x\in\mathbb R^n$, is said to subsist {\it nearly everywhere\/} (n.e.) in a set $B\subset\mathbb R^n$ if $C_\alpha(N)=0$, where $N$ consists of all $x\in B$ for which $\mathcal U(x)$ fails to hold.

\subsection{Generalized condensers. Vector measures and their $\alpha$-Riesz energies}\label{sec-cond}

Given $B\subset\mathbb R^n$, write $B^c:=\mathbb R^n\setminus B$. Recall that a ({\it standard\/}) {\it condenser\/} in~$\mathbb R^n$ is usually meant as an ordered pair of nonempty, closed (though not necessarily compact), nonintersecting sets in~$\mathbb R^n$. We extend this notion as follows.

\begin{definition}\label{def-cond} An ordered pair $\mathbf A:=(A_1,A_2)$ of nonempty sets in $\mathbb R^n$ is called a {\it generalized condenser\/} if the following two conditions are fulfilled for every $i=1,2$:
\begin{itemize}
\item[\rm (a)] $A_i\subset D_i$, {\it where\/} $D_i:=\bigl({\rm C\ell}_{\mathbb R^n}A_j\bigr)^c$, $j\ne i$;
\item[\rm (b)] $A_i$ {\it is closed in the relative topology of the\/} ({\it open\/}) {\it set\/}~$D_i$.
\end{itemize}\end{definition}

Observe that the notion of a generalized condenser $\mathbf A=(A_1,A_2)$ is reduced to that of a standard one if and only if the sets~$A_i$, $i=1,2$, are closed in~$\mathbb R^n$.

In the example below, $n=3$ and $\overline{B}(x,1)$ is the closed three-dimensional ball of radius $1$ centered at $x\in\mathbb R^3$.

\begin{example}\label{ex-1} Consider $\overline{B}(\xi_1,1)$ and $\overline{B}(\xi_2,1)$ with $\xi_1=(0,0,0)$ and $\xi_2=(2,0,0)$; these balls intersect each other at $\xi_0=(1,0,0)$. Then the sets $A_i:=\overline{B}(\xi_i,1)\setminus\{\xi_0\}$, $i=1,2$, satisfy both assumptions~(a) and~(b) from Definition~\ref{def-cond} and, hence, form a generalized condenser~$\mathbf A$ in~$\mathbb R^3$, which certainly is not a standard one.\end{example}

In all that follows, fix a generalized condenser $\mathbf A=(A_1,A_2)$ such that $A_i\ne D_i$ for all $i=1,2$. To avoid triviality, suppose
$\prod_{i=1,2}\,C_\alpha(A_i)>0$.

Let $\mathfrak M^+(\mathbf A)$ stand for the Cartesian product $\prod_{i=1,2}\,\mathfrak M^+(A_i;D_i)$, where $D_i$ is thought of as a locally compact space. Then $\boldsymbol{\nu}\in\mathfrak M^+(\mathbf A)$ is a nonnegative {\it vector measure\/} $(\nu^i)_{i=1,2}$ with the components $\nu^i\in\mathfrak M^+(A_i;D_i)$; it is said to be {\it associated\/} with the condenser~$\mathbf A$.

\begin{definition}\label{def-vague}
  The $\mathbf A$-{\it vague\/} topology on $\mathfrak M^+(\mathbf A)$ is the topology of the product space $\prod_{i=1,2}\,\mathfrak M^+(A_i;D_i)$, where each of the factors $\mathfrak M^+(A_i;D_i)$, $i=1,2$, is endowed with the vague topology induced from~$\mathfrak M(D_i)$.\end{definition}

As $A_i$ is closed in $D_i$, $\mathfrak M^+(\mathbf A)$ is $\mathbf A$-vaguely closed. Besides, since every $\mathfrak M(D_i)$ is Hausdorff, so is $\mathfrak M^+(\mathbf A)$ (see~\cite[Chapter~3, Theorem~5]{K}). Hence, an $\mathbf A$-vague limit of any $\{{\boldsymbol\nu}_k\}_{k\in\mathbb N}\subset\mathfrak M^+(\mathbf A)$ belongs to~$\mathfrak M^+(\mathbf A)$ and is unique (provided it exists).

 If $\boldsymbol{\nu}\in\mathfrak M^+(\mathbf A)$ and a vector-valued function
$\boldsymbol{u}=(u_i)_{i=1,2}$ with the $\nu^i$-measurable components $u_i:A_i\to[-\infty,\infty]$ are given, then we write $\langle\boldsymbol{u},\boldsymbol{\nu}\rangle:=\sum_{i=1,2}\,\int u_i\,d\nu^i$.

We call $A_1$ and $A_2$ the {\it positive\/} and the {\it negative plates\/} of~$\mathbf A$, respectively. In accordance with the electrostatic interpretation of a condenser, assume that the interaction between the charges lying on the conductors~$A_i$, $i=1,2$, is characterized by the matrix $(s_is_j)_{i,j=1,2}$, where
\[ s_i:={\rm sign}\,A_i=\left\{
\begin{array}{rll} +1 & \mbox{if} & i=1,\\ -1 & \mbox{if} & i=2.\\ \end{array} \right. \]
Then the {\it $\alpha$-Riesz mutual energy\/} of $\boldsymbol{\nu},\boldsymbol{\nu}_1\in\mathfrak M^+(\mathbf A)$ is given formally by
\begin{equation}\label{env}E_\alpha(\boldsymbol{\nu},\boldsymbol{\nu}_1):=\sum_{i,j=1,2}\,s_is_j\int|x-y|^{\alpha-n}\,d(\nu^i\otimes\nu_1^j)(x,y).\end{equation}
For $\boldsymbol{\nu}=\boldsymbol{\nu}_1$, $E_\alpha(\boldsymbol{\nu},\boldsymbol{\nu}_1)$ defines  the {\it $\alpha$-Riesz energy\/} $E_\alpha(\boldsymbol{\nu}):=E_\alpha(\boldsymbol{\nu},\boldsymbol{\nu})$ of~$\boldsymbol{\nu}$. We denote by $\mathcal E_\alpha^+(\mathbf A)$ the set of all $\boldsymbol{\nu}\in\mathfrak M^+(\mathbf A)$ whose energy $E_\alpha(\boldsymbol{\nu})$ is finite.

\subsection{Metric structure on classes of vector measures}\label{sec-Metric}
Let $\breve{\mathfrak M}^+(\mathbf A)$ consist of all $\boldsymbol{\nu}\in\mathfrak M^+(\mathbf A)$ such that each of its components~$\nu^i$, $i=1,2$, can be extended to a Radon measure on~$\mathbb R^n$ (denote it again by~$\nu^i$) by setting
\[\nu^i(\varphi):=\langle\chi_{D_i}\varphi,\nu^i\rangle\quad\text{for all \ }\varphi\in\mathrm C_0(\mathbb R^n),\]
where $\chi_{D_i}$ is the characteristic function of~$D_i$. A sufficient condition for $\boldsymbol{\nu}\in\mathfrak M^+(\mathbf A)$ to belong to~$\breve{\mathfrak M}^+(\mathbf A)$ is that $\nu^i(A_i)<\infty$ for all $i=1,2$. Also note that
\begin{equation}\label{proper}\breve{\mathfrak M}^+(\mathbf A)=\mathfrak M^+(\mathbf A)\iff\text{$\mathbf A$ is standard};\end{equation}
otherwise,
$\breve{\mathfrak M}^+(\mathbf A)$ forms a proper subset of~$\mathfrak M^+(\mathbf A)$ that is not $\mathbf A$-vaguely closed.

For any $\boldsymbol{\nu}\in\breve{\mathfrak M}^+(\mathbf A)$, write
\begin{equation}\label{defR}R\boldsymbol{\nu}:=\sum_{i=1,2}\,s_i\nu^i;\end{equation}
then $R\boldsymbol{\nu}$ is a {\it signed\/} scalar Radon measure on~$\mathbb R^n$. Since $A_1\cap A_2=\varnothing$, $R$ is a one-to-one mapping between $\breve{\mathfrak M}^+(\mathbf A)$ and its $R$-image, \begin{equation*}R\bigl(\breve{\mathfrak M}^+(\mathbf A)\bigr)=\bigl\{\nu\in\mathfrak M(\mathbb R^n): \ \nu^+\in\mathfrak M^+(A_1;D_1), \ \nu^-\in\mathfrak M^+(A_2;D_2)\bigr\}.\end{equation*}

\begin{lemma}\label{l-Ren} For any\/
$\boldsymbol{\nu},\boldsymbol{\nu}_1\in\breve{\mathfrak M}^+(\mathbf A)$, $E_\alpha(\boldsymbol{\nu},\boldsymbol{\nu}_1)$ is well defined if and only if so is\/ $E_\alpha(R\boldsymbol{\nu},R\boldsymbol{\nu}_1)$, and then they coincide:
\begin{equation}\label{Re}E_\alpha(\boldsymbol{\nu},\boldsymbol{\nu}_1)=E_\alpha(R\boldsymbol{\nu},R\boldsymbol{\nu}_1).\end{equation}\end{lemma}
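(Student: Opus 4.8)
The plan is to reduce the identity \eqref{Re} to a statement about ordinary (scalar) Riesz energies by expanding both sides into sums of mutual energies of the nonnegative components, and to carry out the reduction in two stages: first under the assumption that all mutual energies of the components are finite, then in the general ``well-defined'' case via a truncation argument. Throughout, the key point is that, after the canonical extension defining $\breve{\mathfrak M}^+(\mathbf A)$, each $\nu^i$ is a genuine nonnegative Radon measure on $\mathbb R^n$ concentrated on $A_i$, so $|x-y|^{\alpha-n}$ may be integrated against the product measures $\nu^i\otimes\nu_1^j$ on $\mathbb R^n\times\mathbb R^n$ without reference to the spaces $D_i$.

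First I would record the bookkeeping: since $R\boldsymbol{\nu}=\sum_i s_i\nu^i$ and $R\boldsymbol{\nu}_1=\sum_j s_j\nu_1^j$, bilinearity of the Riesz mutual energy (valid whenever all terms are defined) gives
\begin{equation*}
E_\alpha(R\boldsymbol{\nu},R\boldsymbol{\nu}_1)=\sum_{i,j=1,2}s_is_j\,E_\alpha(\nu^i,\nu_1^j)=\sum_{i,j=1,2}s_is_j\int|x-y|^{\alpha-n}\,d(\nu^i\otimes\nu_1^j)(x,y),
\end{equation*}
which is precisely the right-hand member of \eqref{env} defining $E_\alpha(\boldsymbol{\nu},\boldsymbol{\nu}_1)$, provided the individual integrals $\int|x-y|^{\alpha-n}\,d(\nu^i\otimes\nu_1^j)$ are the same whether $\nu^i,\nu_1^j$ are viewed on $D_i,D_j$ or on $\mathbb R^n$. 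This last equality of integrals is the heart of the matter, and it follows because the extended measure $\nu^i$ charges no mass outside $A_i\subset D_i$: the sets $D_i^c$ are $\nu^i$-negligible by construction, so $\nu^i\otimes\nu_1^j$ assigns no mass to $(\mathbb R^n\times\mathbb R^n)\setminus(D_i\times D_j)$, whence integrating the (nonnegative, lower semicontinuous) Riesz kernel over $\mathbb R^n\times\mathbb R^n$ or over $D_i\times D_j$ yields the same value in $[0,+\infty]$. This handles the case in which every $E_\alpha(\nu^i,\nu_1^j)$ is finite: then both sides of \eqref{Re} are finite and equal.

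For the general case, ``$E_\alpha(\boldsymbol{\nu},\boldsymbol{\nu}_1)$ is well defined'' means by our convention that the sum in \eqref{env} makes sense as an element of $[-\infty,+\infty]$, i.e.\ the positive part $\sum_{s_is_j=1}E_\alpha(\nu^i,\nu_1^j)$ and the negative part $\sum_{s_is_j=-1}E_\alpha(\nu^i,\nu_1^j)$ are not both $+\infty$; likewise for $E_\alpha(R\boldsymbol{\nu},R\boldsymbol{\nu}_1)=E_\alpha\bigl((R\boldsymbol{\nu})^+ ,(R\boldsymbol{\nu})^+\bigr)+\dots$. Since $(R\boldsymbol{\nu})^+=\nu^1$ and $(R\boldsymbol{\nu})^-=\nu^2$ (here one uses $A_1\cap A_2=\varnothing$, so that the two components do not overlap and the Hahn--Jordan decomposition of $R\boldsymbol{\nu}$ is exactly $(\nu^1,\nu^2)$), the positive and negative parts of $E_\alpha(R\boldsymbol{\nu},R\boldsymbol{\nu}_1)$ are term-by-term the same sums of $E_\alpha(\nu^i,\nu_1^j)$ as those appearing in \eqref{env}. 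Hence one side is well defined precisely when the other is, and in that case I would conclude the equality \eqref{Re} by approximating each $\nu^i$ from below by its restrictions $\nu^i|_{K}$ to an increasing exhaustion of $A_i$ by compact sets $K$ (on which all energies are finite, the kernel being locally bounded off the diagonal and the components having finite mass on compacts), applying the already-proved finite-energy case, and passing to the limit by monotone convergence in each of the four nonnegative integrals $\int|x-y|^{\alpha-n}\,d(\nu^i\otimes\nu_1^j)$ separately.

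The main obstacle I anticipate is purely a matter of care with the ``well-defined'' convention rather than any deep difficulty: one must make sure that the grouping of the four terms by the sign $s_is_j$ is respected on both sides, so that the reduction does not inadvertently add $+\infty$ and $-\infty$. Once it is observed that the extension map $R$ sends $\boldsymbol{\nu}$ to a signed measure whose Jordan components are \emph{literally} $\nu^1$ and $\nu^2$, the matching of positive and negative parts is automatic, and the rest is the monotone-convergence passage to the limit, which is routine because all four kernels integrals are nonnegative.
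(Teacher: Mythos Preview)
Your core argument is correct and is precisely what the paper's one-line proof (``this can be obtained directly from~(\ref{env}) and~(\ref{defR})'') means when unpacked: since $A_1\cap A_2=\varnothing$, the Hahn--Jordan decomposition of $R\boldsymbol{\nu}$ is exactly $(\nu^1,\nu^2)$, so both sides of~\eqref{Re} are literally the same expression $\sum_{i,j}s_is_jE_\alpha(\nu^i,\nu_1^j)$ with the same grouping into positive and negative parts. Hence well-definedness and value coincide on the two sides simultaneously.

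The truncation/monotone-convergence step you append is unnecessary. Once the positive part $E_\alpha(\nu^1,\nu_1^1)+E_\alpha(\nu^2,\nu_1^2)$ and the negative part $E_\alpha(\nu^1,\nu_1^2)+E_\alpha(\nu^2,\nu_1^1)$ have been identified as the \emph{same} nonnegative quantities on both sides, the equality of values (whenever well defined) is immediate; there is nothing further to prove. Your parenthetical justification for the truncation is also not quite right: the claim that ``all energies are finite on compacts, the kernel being locally bounded off the diagonal'' fails for the diagonal terms $E_\alpha(\nu^i|_K,\nu_1^i|_K)$, since the diagonal lies in $A_i\times A_i$ and the Riesz kernel is unbounded there. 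This flaw is harmless only because the whole step is superfluous.
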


\begin{proof} Indeed, this can be obtained directly from~(\ref{env}) and (\ref{defR}).\end{proof}

In view of the strict positive definiteness of the $\alpha$-Riesz kernel, Lemma~\ref{l-Ren} yields that  $E_\alpha(\boldsymbol{\nu})$, $\boldsymbol{\nu}\in\breve{\mathfrak M}^+(\mathbf A)$, is ${}\geqslant0$ whenever defined, and it is zero only for $\boldsymbol{\nu}=\boldsymbol{0}$.
Write $\breve{\mathcal E}_\alpha^+(\mathbf A):=\mathcal E_\alpha^+(\mathbf A)\cap\breve{\mathfrak M}^+(\mathbf A)$. Having defined
\begin{equation*}\label{metr-def}\|\boldsymbol{\nu}-\boldsymbol{\nu}_1\|_{\breve{\mathcal E}^+_{\alpha}(\mathbf A)}:=\Bigl[\sum_{i,j=1,2}\,s_is_jE_\alpha(\nu^i-\nu_1^i,\nu^j-\nu_1^j)\Bigr]^{1/2}\quad\text{for all \ }\boldsymbol{\nu},\boldsymbol{\nu}_1\in\breve{\mathcal E}^+_\alpha(\mathbf A),\end{equation*}
we also see from~(\ref{Re}) by means of a straightforward calculation that, in fact,
\begin{equation}\label{isom}\|\boldsymbol{\nu}-\boldsymbol{\nu}_1\|_{\breve{\mathcal E}^+_\alpha(\mathbf A)}=\|R\boldsymbol{\nu}-R\boldsymbol{\nu}_1\|_\alpha,\end{equation}
so that $\breve{\mathcal E}^+_\alpha(\mathbf A)$ forms a metric space with the metric $\|\boldsymbol{\nu}-\boldsymbol{\nu}_1\|_{\breve{\mathcal E}^+_\alpha(\mathbf A)}$. Since, in consequence of~(\ref{isom}), $\breve{\mathcal E}^+_\alpha(\mathbf A)$ and its $R$-image are isometric, similar to the terminology in~$\mathcal E_\alpha(\mathbb R^n)$ we shall call the topology of the metric space $\breve{\mathcal E}^+_\alpha(\mathbf A)$ {\it strong}.

\subsection{Unconstrained $\mathbf{f}$-weighted minimum $\alpha$-Riesz energy problem}\label{sec-Gauss}
Given a locally compact space~$\mathrm X$, let $\mathrm\Phi(\mathrm X)$ consist of all lower semicontinuous
functions $\psi:\mathrm X\to(-\infty,\infty]$ such that
$\psi\geqslant0$ unless $\mathrm X$ is compact. Then for any $\psi\in\mathrm\Phi(\mathrm X)$, the map
\[\mu\mapsto\langle\psi,\mu\rangle,\quad\mu\in\mathfrak M^+(\mathrm X),\] is vaguely lower semicontinuous (see, e.g.,~\cite[Section~1.1]{F1}).

Fix a vector-valued function $\mathbf{f}=(f_i)_{i=1,2}$, where each $f_i:A_i\to[-\infty,\infty]$ is universally measurable and it is treated as an {\it external field\/} acting on the charges from $\mathfrak M^+(A_i;D_i)$. Then
the $\mathbf f$-{\it weighted $\alpha$-Riesz energy\/} of $\boldsymbol{\nu}\in\mathcal E_\alpha^+(\mathbf A)$ is defined by
\begin{equation}
\label{wen}G_{\alpha,\mathbf{f}}(\boldsymbol{\nu}):=E_\alpha(\boldsymbol{\nu})+2\langle\mathbf{f},\boldsymbol{\nu}\rangle;\end{equation}
$G_{\alpha,\mathbf{f}}(\mathbf{\cdot})$ is also known as the {\it Gauss functional\/} (see, e.g.,~\cite{O}). Let $\mathcal E_{\alpha,\mathbf{f}}^+(\mathbf A)$ consist of all $\boldsymbol{\nu}\in\mathcal E_\alpha^+(\mathbf A)$ with finite $G_{\alpha,\mathbf{f}}(\boldsymbol{\nu})$.

In this paper, we tacitly assume that one of the following Cases~I or~II holds:
\begin{itemize}
\item[\rm I.] {\it For every\/ $i=1,2$, $f_i\in\mathrm\Phi(A_i)$, where\/ $A_i$ is thought of as a locally compact space\/};
\item[\rm II.] {\it For every\/ $i=1,2$, $f_i=s_iU_\alpha^\zeta\bigl|_{A_i}$, where a\/ {\rm(}signed\/{\rm)} scalar measure\/ $\zeta\in\mathcal E_\alpha(\mathbb R^n)$ is given\/}.
\end{itemize}
For any $\boldsymbol{\nu}\in\breve{\mathcal E}^+_{\alpha}(\mathbf A)$, $G_{\alpha,\mathbf f}(\boldsymbol{\nu})$ is then well defined in both Cases~I and~II.
Furthermore, if Case~II takes place, then, by~(\ref{wen}) and~(\ref{Re}),
\begin{align}\label{C2}G_{\alpha,\mathbf f}(\boldsymbol{\nu})&=\|R\boldsymbol{\nu}\|^2_\alpha+2\sum_{i=1,2}\,s_iE_\alpha(\zeta,\nu^i)\\{}&=\|R\boldsymbol{\nu}\|^2_\alpha+2E_\alpha(\zeta,R\boldsymbol{\nu})=
\|R\boldsymbol{\nu}+\zeta\|^2_\alpha-\|\zeta\|_\alpha^2\notag\end{align}
and, consequently,
\begin{equation}\label{GII}-\infty<-\|\zeta\|_\alpha^2\leqslant G_{\alpha,\mathbf f}(\boldsymbol{\nu})<\infty\quad\text{for all \ }\boldsymbol{\nu}\in\breve{\mathcal E}^+_{\alpha}(\mathbf A).\end{equation}

Also fix a numerical vector $\mathbf a=(a_i)_{i=1,2}$ with $a_i>0$ and a vector-valued function $\mathbf{g}=(g_i)_{i=1,2}$, where all the $g_i:D_i\to(0,\infty)$ are continuous and such that
\begin{equation}\label{infg}g_{i,\inf}:=\inf_{x\in
A_i}\,g_i(x)>0.\end{equation} Write
\begin{align*}\mathfrak M^+(\mathbf A,\mathbf a,\mathbf g)&:=\bigl\{\boldsymbol{\nu}\in\mathfrak M^+(\mathbf A): \ \langle g_i,\nu^i\rangle=a_i\quad\text{for all \ }i=1,\,2\bigr\},\\
\mathcal E^+_{\alpha,\mathbf f}(\mathbf A,\mathbf a,\mathbf g)&:=\mathfrak M^+(\mathbf A,\mathbf a,\mathbf g)\cap\mathcal E^+_{\alpha,\mathbf f}(\mathbf A),\\
G_{\alpha,\mathbf f}(\mathbf A,\mathbf a,\mathbf g)&:=\inf_{\boldsymbol{\nu}\in\mathcal E^+_{\alpha,\mathbf f}(\mathbf A,\mathbf a,\mathbf g)}\,G_{\alpha,\mathbf f}(\boldsymbol{\nu}).\end{align*}

Observe that, because of~(\ref{infg}),
\[\nu^i(A_i)\leqslant a_ig_{i,\inf}^{-1}<\infty\quad\text{for all \ }\boldsymbol{\nu}\in\mathfrak M^+(\mathbf A,\mathbf a,\mathbf g)\]
and, therefore,
\begin{equation}\label{inlusions}\mathfrak M^+(\mathbf A,\mathbf a,\mathbf g)\subset\breve{\mathfrak M}^+(\mathbf A),\quad\mathcal E^+_{\alpha,\mathbf f}(\mathbf A,\mathbf a,\mathbf g)\subset
\breve{\mathcal E}^+_\alpha(\mathbf A).\end{equation}
Combined these with Lemma~\ref{l-Ren} and the fact that a lower semicontinuous function is bounded from below on a compact set, in Case~I we obtain
\[G_{\alpha,\mathbf f}(\mathbf A,\mathbf a,\mathbf g)>-\infty.\]
The same holds true in Case~II as well, which is obvious from~(\ref{GII}) and~(\ref{inlusions}).

If the class $\mathcal E^+_{\alpha,\mathbf f}(\mathbf A,\mathbf a,\mathbf g)$ is nonempty or, equivalently, if
\begin{equation}\label{Gf}G_{\alpha,\mathbf f}(\mathbf A,\mathbf a,\mathbf g)<\infty,\end{equation}
then the following (unconstrained) $\mathbf{f}$-weighted minimum $\alpha$-Riesz energy problem, also known as the {\it Gauss variational problem\/} (see~\cite{Gauss,O}), makes sense.

\begin{problem}\label{pr1} Does there exist $\boldsymbol{\lambda}_{\mathbf A}\in\mathcal E^+_{\alpha,\mathbf f}(\mathbf A,\mathbf a,\mathbf g)$ with $G_{\alpha,\mathbf f}(\boldsymbol{\lambda}_{\mathbf A})=G_{\alpha,\mathbf f}(\mathbf A,\mathbf a,\mathbf g)$?\end{problem}

\begin{remark} Analysis similar to that for a standard condenser (cf.~Lemma~6.2 in~\cite{ZPot2}) shows that assumption~(\ref{Gf}) is equivalent to the following one:
\[f_i(x)<\infty\quad\text{n.e.~in \ }A_i,\quad i=1,2.\]
In turn, this yields that (\ref{Gf}) holds automatically whenever Case~II takes place, for the $\alpha$-Riesz potential of $\zeta\in\mathcal E_\alpha(\mathbb R^n)$ is finite n.e.~in~$\mathbb R^n$.\end{remark}

\begin{remark}\label{r-2}In the case where every $A_i$ is compact in~$D_i$ (i.e., $\mathbf A$ is a compact standard condenser) and Case~I takes place, the solvability  of Problem~\ref{pr1} can easily be established by exploiting the $\mathbf A$-va\-gue topology only, since then $\mathfrak M^+(\mathbf A,\mathbf a,\mathbf g)$ is $\mathbf A$-va\-guely compact, while $G_{\alpha,\mathbf f}(\mathbf{\cdot})$ is $\mathbf A$-va\-guely lower semicontinuous on~$\mathcal E^+_{\alpha,\mathbf f}(\mathbf A)$ (see~\cite[Theorem~2.30]{O}). However, these arguments break down if any of the two requirements is not satisfied, and then Problem~\ref{pr1} becomes rather nontrivial. E.g., $\mathfrak M^+(\mathbf A,\mathbf a,\mathbf g)$ is no longer $\mathbf A$-vaguely compact if some of the~$A_i$ is noncompact in~$D_i$.\end{remark}

\begin{remark}Assume that $\mathbf A$ is still a standard condenser, though now, in contrast to Remark~\ref{r-2}, its plates might be noncompact in~$\mathbb R^n$. Under the assumption
\begin{equation}\label{distpos}{\rm dist}\,(A_1,A_2):=\inf_{x\in A_1, \ y\in A_2}\,|x-y|>0,\end{equation}
in~\cite{ZPot2,ZPot3} we worked out an approach based on both the $\mathbf A$-vague and the strong topologies on~$\mathcal E_\alpha^+(\mathbf A)$ and a certain strong completeness result, which made it possible to provide a fairly complete analysis of Problem~\ref{pr1}. In more detail, it has been shown that, if $g_i|_{A_i}$, $i=1,2$, are bounded from above, then, in both Cases~I and~II,  \begin{equation}\label{r-suff}C_\alpha(A_1\cup A_2)<\infty\end{equation} is sufficient for Problem~\ref{pr1} to be (uniquely) solvable for every~$\mathbf a$ (see~\cite[Theorem~8.1]{ZPot2}). However, if (\ref{r-suff}) does not hold, then, in general, there exists a vector~$\mathbf a'$ such that the Gauss variational problem admits no solution~\cite{ZPot2}. Therefore, it was interesting to give a description of the set of all vectors~$\mathbf a$ for which the
problem would be, nevertheless, solvable. Such a characterization has been established in~\cite{ZPot3}.\end{remark}

In the rest of the paper, except for Remark~\ref{rem-3}, we do not assume (\ref{distpos}) necessarily to hold.
Then the results obtained in~\cite{ZPot2,ZPot3} and the approach developed are no longer valid.
In particular, assumption~(\ref{r-suff}) does not guarantee anymore that $G_{\alpha,\mathbf f}(\mathbf A,\mathbf a,\mathbf g)$ is attained among $\boldsymbol{\nu}\in\mathcal E^+_{\alpha,\mathbf f}(\mathbf A,\mathbf a,\mathbf g)$. Using the electrostatic interpretation, a short-circuit between the touching oppositely-charged plates of the condenser might occur. Therefore, it is meaningful to ask what kind of additional requirements on the measures under consideration would prevent this phenomenon, and a solution to the corresponding $\mathbf{f}$-weighted minimum $\alpha$-Riesz energy problem would, nevertheless, exist.

The idea discussed below is to find out such an upper constraint on the measures from $\mathfrak M^+(\mathbf A,\mathbf a,\mathbf g)$ which would not allow the "blow-up" effect between~$A_1$ and~$A_2$.

\subsection{Constrained $\mathbf{f}$-weighted minimum $\alpha$-Riesz energy problem}\label{sec-Gauss-c}

Let $\mathfrak C(\mathbf A)$ consist of all $\boldsymbol{\sigma}=(\sigma^i)_{i=1,2}\in\mathfrak M^+(\mathbf A)$ such that
\begin{equation}\label{g-mass}S^{\sigma^i}_{D_i}=A_i\text{ \ and \ }\langle g_i,\sigma^i\rangle>a_i\quad\text{for all \ }i=1,2;\end{equation}
these $\boldsymbol{\sigma}$ will serve as {\it constraints\/} for $\boldsymbol{\nu}\in\mathfrak M^+(\mathbf A)$. Given $\boldsymbol{\sigma}\in\mathfrak C(\mathbf A)$, write
\[\mathfrak M^{\boldsymbol{\sigma}}(\mathbf A):=\bigl\{\boldsymbol{\nu}\in\mathfrak M^+(\mathbf A): \ \nu^i\leqslant\sigma^i\quad\text{for all \ }i=1,\,2\bigr\},\]
where $\nu^i\leqslant\sigma^i$ means that $\sigma^i-\nu^i$ is a nonnegative scalar measure, and
\begin{align*}
\mathfrak M^{\boldsymbol{\sigma}}(\mathbf A,\mathbf a,\mathbf g)&:=\mathfrak M^{\boldsymbol{\sigma}}(\mathbf A)\cap\mathfrak M^+(\mathbf A,\mathbf a,\mathbf g),\\
\mathcal E^{\boldsymbol{\sigma}}_{\alpha,\mathbf f}(\mathbf A,\mathbf a,\mathbf g)&:=\mathfrak M^{\boldsymbol{\sigma}}(\mathbf A,\mathbf a,\mathbf g)\cap\mathcal E^+_{\alpha,\mathbf f}(\mathbf A).\end{align*}
Since $\mathcal E^{\boldsymbol{\sigma}}_{\alpha,\mathbf f}(\mathbf A,\mathbf a,\mathbf g)\subset\mathcal E^+_{\alpha,\mathbf f}(\mathbf A,\mathbf a,\mathbf g)$,
we get
\[-\infty<G_{\alpha,\mathbf f}(\mathbf A,\mathbf a,\mathbf g)\leqslant G^{\boldsymbol{\sigma}}_{\alpha,\mathbf f}(\mathbf A,\mathbf a,\mathbf g):=\inf_{\boldsymbol{\nu}\in\mathcal E^{\boldsymbol{\sigma}}_{\alpha,\mathbf f}(\mathbf A,\mathbf a,\mathbf g)}\,G_{\alpha,\mathbf f}(\boldsymbol{\nu})\leqslant\infty.\]

If the class $\mathcal E^{\boldsymbol{\sigma}}_{\alpha,\mathbf f}(\mathbf A,\mathbf a,\mathbf g)$ is nonempty or, equivalently, if
\begin{equation}\label{gauss-finite}G_{\alpha,\mathbf f}^{\boldsymbol{\sigma}}(\mathbf A,\mathbf a,\mathbf g)<\infty,\end{equation}
then the following constrained $\mathbf{f}$-weighted minimum $\alpha$-Riesz energy problem, also known as the {\it constrained Gauss variational problem\/}, makes sense.

\begin{problem}\label{pr2}Given $\boldsymbol{\sigma}\in\mathfrak C(\mathbf A)$, does there exist $\boldsymbol{\lambda}_{\mathbf A}^{\boldsymbol{\sigma}}\in\mathcal E^{\boldsymbol{\sigma}}_{\alpha,\mathbf f}(\mathbf A,\mathbf a,\mathbf g)$ with \[G_{\alpha,\mathbf f}(\boldsymbol{\lambda}^{\boldsymbol{\sigma}}_{\mathbf A})=G_{\alpha,\mathbf f}^{\boldsymbol{\sigma}}(\mathbf A,\mathbf a,\mathbf g)?\]\end{problem}

\begin{remark}\label{rem-3}Assume for a moment that (\ref{distpos}) holds. It has been shown by~\cite[Theorem~6.2]{Z9} that if, in addition,
$g_i|_{A_i}$, $i=1,2$, are bounded from above and conditions~(\ref{r-suff}) and~(\ref{gauss-finite}) are satisfied, then, in both Cases~I and~II, Problem~\ref{pr2} is (uniquely) solvable. But this does not remain true if requirement~(\ref{distpos}) is dropped.\end{remark}

\begin{remark}\label{rem-4}If $0<\alpha\leqslant2<n$, $a_1=a_2$, $\mathbf g=\mathbf 1$, $A_2$ is not $\alpha$-thin at~$\omega_{\mathbb R^n}$, $f_2=0$ and $\sigma^2=\infty$ (i.e., no external field and no constraint act on the measures concentrated in~$A_2$), then sufficient and/or necessary conditions for the solvability of Problem~\ref{pr2} have been established in~\cite{DHSZ}. Crucial to the arguments exploited in~\cite{DHSZ} is that, in this special case, Problem~\ref{pr2} can be reduced to the problem of minimizing the $f_1$-weighted $g^\alpha_{D_1}$-Green energy over the class $\mathcal E^+_{g^\alpha_{D_1}}(A_1;D_1)$. However, under the assumptions of the present study, such an observation is no longer valid.\end{remark}

\begin{remark}\label{rem-5}If $a_1=a_2$, $\mathbf g=\mathbf 1$, $\mathbf f=\mathbf 0$ and $A_i$, $i=1,2$, are bounded, then the constrained minimum logarithmic energy problem for a condenser with touching plates in~$\mathbb C$ has been investigated by Beckermann and Gryson (see~\cite[Theorem~2.2]{BC}). Our paper is related to the $\alpha$-Riesz
kernels, $0<\alpha<n$, in~$\mathbb R^n$, $n\geqslant2$, and the results obtained and the approaches developed are rather different from those in~\cite{BC}.\end{remark}

\section{Sufficient conditions for the solvability of Problem~\ref{pr2}}\label{sec-main}
Denote by $\overline{B}$ the closure of $B\subset\mathbb R^n$ in $\overline{\mathbb R^n}:=\mathbb R^n\cup\{\omega_{\mathbb R^n}\}$, the one-point compactification of~$\mathbb R^n$.

\begin{theorem}\label{th-main}
Let\/ $\mathbf A$, $\mathbf f$, $\mathbf g$ and\/ $\boldsymbol{\sigma}\in\mathfrak C(\mathbf A)$ possess the following four properties:
\begin{itemize}
\item[\rm{(a$'$)}] $\overline{A_1}\cap\overline{A_2}$ consists of at most one point, i.e., $\overline{A_1}\cap\overline{A_2}=\varnothing\vee\{x_0\}$ where\/ $x_0\in\overline{\mathbb R^n}$;
\item[\rm{(b$'$)}]$f_i(x)<\infty$ n.e.~in\/~$A_i$, $i=1,2$;
\item[\rm{(c$'$)}]$E_\alpha\bigl(\sigma^i\bigl|_{K_i}\bigr)<\infty$ for every compact\/ $K_i\subset A_i$, $i=1,2$;
\item[\rm{(d$'$)}]$\langle g_i,\sigma^i\rangle<\infty$, $i=1,2$.
\end{itemize}
Then, in both Cases\/~{\rm I} and\/~{\rm II}, Problem\/~{\rm\ref{pr2}} is uniquely solvable for every vector\/~$\mathbf a$.
\end{theorem}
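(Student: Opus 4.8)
The plan is to adapt the variational/Hilbert-space strategy of \cite{ZPot2,ZPot3,Z9} to the present situation, replacing the strong completeness of $\mathcal E_\alpha^+(\mathbf A)$ (which fails when ${\rm dist}(A_1,A_2)=0$) by the completeness theorem for the metric space $\breve{\mathcal E}_\alpha^+(\mathbf A)$ promised as Theorem~\ref{complete}. First I would check that, under (a$'$)--(d$'$), the admissible class $\mathcal E^{\boldsymbol\sigma}_{\alpha,\mathbf f}(\mathbf A,\mathbf a,\mathbf g)$ is nonempty, i.e.\ that (\ref{gauss-finite}) holds; here (d$'$) guarantees that $\boldsymbol\sigma$ can be ``trimmed'' to a suitable multiple lying in $\mathfrak M^{\boldsymbol\sigma}(\mathbf A,\mathbf a,\mathbf g)$, (c$'$) secures finite $\alpha$-Riesz energy of such a trimmed measure when we restrict to a compact exhausting piece of each $A_i$, and (b$'$) — equivalently (\ref{Gf}) by the Remark — makes $G_{\alpha,\mathbf f}$ finite on it. The containment (\ref{inlusions})-type inclusion $\mathcal E^{\boldsymbol\sigma}_{\alpha,\mathbf f}(\mathbf A,\mathbf a,\mathbf g)\subset\breve{\mathcal E}_\alpha^+(\mathbf A)$ follows since $\langle g_i,\nu^i\rangle=a_i$ forces $\nu^i(A_i)\leqslant a_i g_{i,\inf}^{-1}<\infty$, so the whole competition lives in the metric space where we have completeness.

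Next comes the core convergence argument. Take a minimizing sequence $\{\boldsymbol\nu_k\}\subset\mathcal E^{\boldsymbol\sigma}_{\alpha,\mathbf f}(\mathbf A,\mathbf a,\mathbf g)$, $G_{\alpha,\mathbf f}(\boldsymbol\nu_k)\to G^{\boldsymbol\sigma}_{\alpha,\mathbf f}(\mathbf A,\mathbf a,\mathbf g)$. In Case~II the identity (\ref{C2}) rewrites $G_{\alpha,\mathbf f}(\boldsymbol\nu_k)=\|R\boldsymbol\nu_k+\zeta\|_\alpha^2-\|\zeta\|_\alpha^2$, and convexity of the admissible class together with the parallelogram law in $\mathcal E_\alpha(\mathbb R^n)$ gives $\|R\boldsymbol\nu_k-R\boldsymbol\nu_\ell\|_\alpha^2\to0$, i.e.\ $\{\boldsymbol\nu_k\}$ is strong Cauchy in $\breve{\mathcal E}_\alpha^+(\mathbf A)$ by the isometry (\ref{isom}); in Case~I one first convexifies the external-field term (it is affine in $\boldsymbol\nu$) and runs the same parallelogram computation on $E_\alpha(\boldsymbol\nu_k-\boldsymbol\nu_\ell)=\|R\boldsymbol\nu_k-R\boldsymbol\nu_\ell\|_\alpha^2$. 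By Theorem~\ref{complete} the sequence converges strongly, and hence (again via the isometry and perfectness of $\kappa_\alpha$) also $\mathbf A$-vaguely, to some $\boldsymbol\lambda_{\mathbf A}^{\boldsymbol\sigma}\in\breve{\mathcal E}_\alpha^+(\mathbf A)$. Then I would verify that the limit is still admissible: $\nu^i_k\leqslant\sigma^i$ passes to the $\mathbf A$-vague limit because $\sigma^i-\nu^i_k\geqslant0$ and $\mathfrak M^+$ is vaguely closed; the mass conditions $\langle g_i,\nu^i_k\rangle=a_i$ pass to the limit using $g_i>0$ continuous together with the upper bound $\nu^i_k\leqslant\sigma^i$ and (d$'$) (a Fatou/uniform-integrability argument, dominating the tails uniformly by the finite measure $g_i\sigma^i$); finiteness of $G_{\alpha,\mathbf f}(\boldsymbol\lambda_{\mathbf A}^{\boldsymbol\sigma})$ and the inequality $G_{\alpha,\mathbf f}(\boldsymbol\lambda_{\mathbf A}^{\boldsymbol\sigma})\leqslant\liminf_k G_{\alpha,\mathbf f}(\boldsymbol\nu_k)$ come from strong convergence of energies plus vague lower semicontinuity of $\boldsymbol\nu\mapsto\langle\mathbf f,\boldsymbol\nu\rangle$ (Case~I) or the continuous representation (\ref{C2}) (Case~II). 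This yields $G_{\alpha,\mathbf f}(\boldsymbol\lambda_{\mathbf A}^{\boldsymbol\sigma})=G^{\boldsymbol\sigma}_{\alpha,\mathbf f}(\mathbf A,\mathbf a,\mathbf g)$, solving Problem~\ref{pr2}; uniqueness is immediate from strict positive definiteness (any two minimizers have midpoint admissible with strictly smaller Gauss functional unless they coincide).

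The main obstacle I anticipate is precisely the step of showing the limit measure retains the full prescribed mass, $\langle g_i,\lambda^i\rangle=a_i$ rather than merely $\leqslant a_i$. Vague convergence only gives lower semicontinuity of mass against nonnegative lower semicontinuous integrands, so a priori mass could escape to the boundary point $x_0$ of (a$'$) or to $\omega_{\mathbb R^n}$. This is exactly where hypothesis (a$'$) — at most one common closure point — and the uniform domination $\nu^i_k\leqslant\sigma^i$ with $\langle g_i,\sigma^i\rangle<\infty$ by (d$'$) must be combined: the single possible point of mass loss is negligible for the capacities/energies involved (a point has zero $\alpha$-Riesz capacity, so it carries no mass of a finite-energy measure, which is where (c$'$) re-enters), and the finite majorant $g_i\sigma^i$ rules out escape of mass to infinity within $D_i$. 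Making this tightness argument rigorous — in particular reconciling the $\mathbf A$-vague topology on the locally compact spaces $D_i$ with the one-point compactification $\overline{\mathbb R^n}$, and handling the case $x_0=\omega_{\mathbb R^n}$ versus $x_0\in\mathbb R^n$ uniformly — is the technical heart of the proof. A secondary subtlety is that $\breve{\mathfrak M}^+(\mathbf A)$ is not $\mathbf A$-vaguely closed (see (\ref{proper})), so one must be careful that the strong limit provided by Theorem~\ref{complete} genuinely lies in $\breve{\mathcal E}_\alpha^+(\mathbf A)$ and is represented by a bona fide pair of Radon measures on the $D_i$; the mass bound from admissibility is what rescues this.
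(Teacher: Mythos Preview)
Your proposal is correct and follows essentially the same approach as the paper: nonemptiness of the admissible class from (b$'$), (c$'$), (d$'$); the parallelogram/convexity argument to make a minimizing sequence strong Cauchy; Theorem~\ref{complete} for the limit; lower semicontinuity of $G_{\alpha,\mathbf f}$; vague closedness of $\mathfrak M^{\boldsymbol\sigma}(\mathbf A)$; and domination by $g_i\sigma^i$ for mass preservation.

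One small correction of emphasis: you locate the role of~(a$'$) in the mass-preservation step, but in fact (a$'$) is consumed entirely by Theorem~\ref{complete}. The equality $\langle g_i,\nu_0^i\rangle=a_i$ follows from~(d$'$) alone, via the uniform tail bound
\[
\bigl\langle g_i\chi_{A_i\setminus K_\ell},\nu_k^i\bigr\rangle\leqslant\bigl\langle g_i\chi_{A_i\setminus K_\ell},\sigma^i\bigr\rangle\to 0\quad(\ell\to\infty),
\]
applied to a compact exhaustion $K_\ell\uparrow A_i$; no capacity argument about the point $x_0$ is needed here, and the cases $x_0\in\mathbb R^n$ versus $x_0=\omega_{\mathbb R^n}$ need not be distinguished at this step. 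Your worry about $\breve{\mathfrak M}^+(\mathbf A)$ not being $\mathbf A$-vaguely closed is also handled automatically by working inside $\mathcal E^+_\alpha(\mathbf A,\leqslant\!\mathbf a,\mathbf g)$, which is exactly the metric space to which Theorem~\ref{complete} applies.
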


The proof of Theorem~\ref{th-main} is given in Section~\ref{sec-proof-main}; it is based on Theorem~\ref{complete}, which provides a strong completeness result for metric subspaces of~$\breve{\mathcal E}^+_\alpha(\mathbf A)$.

\begin{example}\label{ex-2} Let $\mathbf A=(A_1,A_2)$ be as in Example~\ref{ex-1}. Having fixed $\alpha\in(0,3)$, assume that $\mathbf g=\mathbf 1$ and either Case~II holds or $f_i(x)<\infty$ n.e.~in~$A_i$, $i=1,2$. For any $\mathbf a=(a_i)_{i=1,2}$ define $\sigma^i:=c_im_3|_{A_i}$, where $c_i\in(a_i,\infty)$ is chosen  arbitrarily and $m_3$ denotes the $3$-dimensional Lebesgue measure on~$\mathbb R^3$. Then, by Theorem~\ref{th-main},  Problem~\ref{pr2} admits a solution; hence, no short-cir\-cuit between~$A_1$ and~$A_2$ occurs, though these conductors touch each other at the point~$\xi_0$ (see Example~\ref{ex-1}).\end{example}

\section{Strong completeness theorem for metric subspaces of $\breve{\mathcal E}^+_\alpha(\mathbf A)$}\label{sec-str}
Let $\mathfrak M^+(\mathbf A,\leqslant\!\mathbf a,\mathbf g)$ consist of all $\boldsymbol{\nu}\in\mathfrak M^+(\mathbf A)$ such that $\langle g_i,\nu^i\rangle\leqslant a_i$ for all $i=1,2$. In view of~(\ref{infg}),
\begin{equation}\label{bbbound}\nu^i(A_i)\leqslant a_ig_{i,\inf}^{-1}<\infty\quad\text{for all \ }\boldsymbol{\nu}\in\mathfrak M^+(\mathbf A,\leqslant\!\mathbf a,\mathbf g).\end{equation}
Hence, $\mathcal E^+_{\alpha}(\mathbf A,\leqslant\!\mathbf a,\mathbf g):=\mathcal E^+_{\alpha}(\mathbf A)\cap\mathfrak M^+(\mathbf A,\leqslant\!\mathbf a,\mathbf g)$
can be thought of as a metric subspace of $\breve{\mathcal E}^+_{\alpha}(\mathbf A)$; its topology will likewise be called {\it strong}.

\begin{theorem}\label{complete} Suppose that a generalized condenser\/ $\mathbf A$ satisfies condition\/ {\rm(a$'$)} of Theorem\/~{\rm\ref{th-main}}. Then the metric space\/ $\mathcal E^+_\alpha(\mathbf A,\leqslant\!\mathbf a,\mathbf g)$ is strongly complete and the strong topology on this space is finer than the induced\/ $\mathbf A$-vague topology.\end{theorem}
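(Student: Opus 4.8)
The plan is to work through the isometry $R$ and transfer the completeness question to the space of signed scalar $\alpha$-Riesz measures, where the $\alpha$-Riesz kernel is perfect. Concretely, let $\{\boldsymbol{\nu}_k\}\subset\mathcal E^+_\alpha(\mathbf A,\leqslant\!\mathbf a,\mathbf g)$ be a strong Cauchy sequence. By~(\ref{isom}) the sequence $\{R\boldsymbol{\nu}_k\}$ is strong Cauchy in~$\mathcal E_\alpha(\mathbb R^n)$, so its positive parts $\nu_k^1$ and negative parts $\nu_k^2$ satisfy mass bounds~(\ref{bbbound}); hence $\{\nu_k^1\}$ and $\{\nu_k^2\}$ are each vaguely relatively compact in $\mathfrak M^+(\mathbb R^n)$ (as subsets of vaguely compact balls of bounded total mass). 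First I would pass to a subsequence along which $\nu_k^i\to\mu^i$ vaguely in $\mathfrak M^+(\mathbb R^n)$ for $i=1,2$, and check that $\langle g_i,\mu^i\rangle\leqslant a_i$ (vague lower semicontinuity applied to $g_i\in\mathrm\Phi$, after exhausting by compacts, using $g_i$ continuous and positive). The genuinely delicate point is that the vague limits $\mu^i$ need not be concentrated on $A_i$ — only on $\overline{A_i}$ in $\overline{\mathbb R^n}$ — and the two limits might both put mass at the common touching point $x_0$ from condition~(a$'$). This is where~(a$'$) is used: I would show that any mass of $\mu^1$ or $\mu^2$ located at $x_0$ (if $x_0\in\mathbb R^n$), or "escaping to infinity" (if $x_0=\omega_{\mathbb R^n}$), must actually be zero, because otherwise the common point would force $\mu^1$ and $\mu^2$ to overlap and the strong Cauchy / finite-energy structure would be violated — more precisely, a nonzero atom at a point of finite capacity has infinite $\alpha$-Riesz energy when $\alpha\le n$... actually here one argues via the potentials: $U_\alpha^{\nu_k^i}$ converges, and if both limits charged $x_0$ the signed limit $R\mu:=\mu^1-\mu^2$ would fail to be the strong limit of $R\boldsymbol{\nu}_k$. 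So the key lemma to isolate is: under~(a$'$), the vague limits $\mu^i$ belong to $\mathfrak M^+(A_i;D_i)$, i.e. $\boldsymbol{\mu}=(\mu^i)_{i=1,2}\in\mathfrak M^+(\mathbf A,\leqslant\!\mathbf a,\mathbf g)$.

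Granting that, the next step is to identify $\boldsymbol{\mu}$ as the strong limit. Since $\{R\boldsymbol{\nu}_k\}$ is strong Cauchy in $\mathcal E_\alpha(\mathbb R^n)$ and $R\boldsymbol{\nu}_k=\nu_k^1-\nu_k^2\to\mu^1-\mu^2=R\boldsymbol{\mu}$ vaguely, I would invoke the classical fact that, for a perfect kernel, a strong Cauchy sequence in $\mathcal E_\alpha^+(\mathbb R^n)$ that converges vaguely converges strongly to the same limit — applied separately to $\{\nu_k^1\}$ and $\{\nu_k^2\}$ if they are individually strong Cauchy, or more robustly: the strong Cauchy property of $\{R\boldsymbol{\nu}_k\}$ plus vague convergence of $R\boldsymbol{\nu}_k$ to $R\boldsymbol{\mu}$ forces $R\boldsymbol{\mu}\in\mathcal E_\alpha(\mathbb R^n)$ and $\|R\boldsymbol{\nu}_k-R\boldsymbol{\mu}\|_\alpha\to0$. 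Here one uses lower semicontinuity of the energy norm in the vague topology on the positive parts together with the completeness of $\mathcal E_\alpha^+(\mathbb R^n)$: write $R\boldsymbol{\nu}_k+\text{(a fixed reference measure)}$ or split into $\pm$ parts and use that the vague limit of the positive (resp.\ negative) parts dominates the positive (resp.\ negative) part of the vague limit, which here is an equality since $A_1\cap A_2=\varnothing$ keeps the Hahn–Jordan decomposition stable under the limit. Then $\|\boldsymbol{\nu}_k-\boldsymbol{\mu}\|_{\breve{\mathcal E}^+_\alpha(\mathbf A)}=\|R\boldsymbol{\nu}_k-R\boldsymbol{\mu}\|_\alpha\to0$ by~(\ref{isom}), and $E_\alpha(\boldsymbol{\mu})=\|R\boldsymbol{\mu}\|_\alpha^2<\infty$ by Lemma~\ref{l-Ren}, so $\boldsymbol{\mu}\in\mathcal E^+_\alpha(\mathbf A,\leqslant\!\mathbf a,\mathbf g)$ and it is the desired strong limit. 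This proves strong completeness.

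For the second assertion — that the strong topology is finer than the $\mathbf A$-vague topology on $\mathcal E^+_\alpha(\mathbf A,\leqslant\!\mathbf a,\mathbf g)$ — I would argue by sequences (the $\mathbf A$-vague topology restricted to this mass-bounded set is metrizable, being induced from $\prod_i\mathfrak M^+(A_i;D_i)$ on a set with a countable determining family of test functions, or one argues directly): if $\boldsymbol{\nu}_k\to\boldsymbol{\nu}$ strongly, then in particular $\{\boldsymbol{\nu}_k\}$ is strong Cauchy, so by the completeness argument just given a subsequence converges $\mathbf A$-vaguely to some $\boldsymbol{\mu}$ which is simultaneously the strong limit, whence $\boldsymbol{\mu}=\boldsymbol{\nu}$ by uniqueness of strong limits (strict positive definiteness); since every subsequence has a further subsequence $\mathbf A$-vaguely converging to the same $\boldsymbol{\nu}$, the whole sequence converges $\mathbf A$-vaguely to $\boldsymbol{\nu}$. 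I expect the main obstacle to be the key lemma of the first paragraph: controlling the behaviour of the vague limits at the touching point $x_0$ (and at $\omega_{\mathbb R^n}$), i.e.\ ruling out simultaneous escape of mass of $\mu^1$ and $\mu^2$ to the single common point, which is exactly the potential-theoretic content of condition~(a$'$) and the phenomenon the whole paper is built to handle. A secondary technical point is justifying $\langle g_i,\mu^i\rangle\leqslant a_i$ for the unbounded-in-general continuous weight $g_i$ on the non-compact $A_i$, which one handles by monotone exhaustion by compacts and vague lower semicontinuity of $\nu\mapsto\langle g_i\chi_{K},\nu\rangle$.
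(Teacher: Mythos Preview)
Your proposal correctly isolates the crux --- what happens at the touching point $x_0$ --- but the argument you offer there does not go through. The ``atom has infinite energy'' idea would require a uniform bound on $E_\alpha(\nu_k^i)$ for each $i$ separately, so that vague lower semicontinuity gives $E_\alpha(\mu^i)<\infty$; however, the strong Cauchy hypothesis controls only $\|R\boldsymbol{\nu}_k\|_\alpha$, and from $E_\alpha(\nu_k^1)-2E_\alpha(\nu_k^1,\nu_k^2)+E_\alpha(\nu_k^2)$ bounded one cannot conclude that $E_\alpha(\nu_k^i)$ stays bounded (both component energies may blow up in tandem near~$x_0$ --- precisely the short-circuit phenomenon). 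Your fallback, that for a perfect kernel ``strong Cauchy plus vague convergence implies strong convergence'', holds on $\mathcal E_\alpha^+(\mathbb R^n)$ but not on $\mathcal E_\alpha(\mathbb R^n)$: Cartan's counterexample shows the pre-Hilbert space of signed measures is incomplete, so this step is exactly what must be proved, not invoked. Nor can you apply perfectness componentwise, since $\{\nu_k^1\}$ and $\{\nu_k^2\}$ are not known to be individually strong Cauchy. Finally, your stability-of-Hahn--Jordan remark uses $A_1\cap A_2=\varnothing$, but the vague limits in $\mathfrak M^+(\mathbb R^n)$ live on the \emph{closures}, which do meet at~$x_0$.

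The paper resolves this with a device you did not consider: it takes the $\mathbf A$-vague (i.e.\ in each $D_i$) cluster point $\boldsymbol{\nu}_0$, which automatically lies in $\mathfrak M^+(\mathbf A,\leqslant\!\mathbf a,\mathbf g)$, and then, when $x_0\in\mathbb R^n$, applies the Kelvin transform (inversion centred at~$x_0$). This sends $x_0$ to $\omega_{\mathbb R^n}$ and turns $\mathbf A$ into a \emph{standard} condenser $\mathbf A^*$, while preserving $\alpha$-Riesz energies, hence the metric, and (via Landkof's Lemma~4.3) the vague convergence. For standard condensers the needed completeness is already known from~\cite{ZUmzh} (stated here as Lemma~\ref{lemma-str-com}); one transforms back by the isometry to obtain $\boldsymbol{\nu}_0\in\mathcal E^+_\alpha(\mathbf A,\leqslant\!\mathbf a,\mathbf g)$ and strong convergence. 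Your treatment of the second assertion (strong $\Rightarrow$ $\mathbf A$-vague via subsequences and uniqueness) is essentially the paper's argument, but it rests on the completeness step, which in your version is gapped.
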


\begin{remark}\label{rem-str}In view of the fact that the metric space $\mathcal
E^+_\alpha(\mathbf A,\leqslant\!\mathbf a,\mathbf g)$ is isometric to its
$R$-image, Theorem~\ref{complete} has singled out a strongly
complete topological subspace of the pre-Hilbert space~$\mathcal
E_\alpha(\mathbb R^n)$, whose elements are signed Radon measures. This is of independent interest since, according to a well-known counterexample by Cartan, the whole pre-Hilbert space $\mathcal E_\alpha(\mathbb R^n)$ is, in general, strongly incomplete.\end{remark}

\subsection{Auxiliary results}
Based on the definition of the $\mathbf A$-vague topology (see Definition~\ref{def-vague}), we call a set $\mathfrak F\subset\mathfrak M^+(\mathbf A)$ {\it
$\mathbf A$-vaguely bounded\/} if, for every $i=1,2$ and every $\varphi\in\mathrm C_0(D_i)$,
\[\sup_{\boldsymbol{\nu}\in\mathfrak F}\,|\nu^i(\varphi)|<\infty.\]

\begin{lemma}\label{lem:vaguecomp} If\/ $\mathfrak F\subset\mathfrak
M^+(\mathbf A)$ is\/ $\mathbf A$-vaguely bounded, then it is\/ $\mathbf A$-vaguely relatively
compact.\end{lemma}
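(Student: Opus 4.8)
The plan is to reduce the assertion to the classical compactness theorem for scalar Radon measures on a locally compact Hausdorff space --- namely, that every vaguely bounded set of measures is vaguely relatively compact --- and then to transfer this to the product space $\mathfrak M^+(\mathbf A)=\prod_{i=1,2}\mathfrak M^+(A_i;D_i)$ by means of Tychonoff's theorem. First I would observe that each $D_i$, being open in $\mathbb R^n$, is itself a locally compact Hausdorff space, so the vague theory of \cite{B2} applies verbatim to $\mathfrak M(D_i)$.

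Next, given $\mathfrak F\subset\mathfrak M^+(\mathbf A)$ that is $\mathbf A$-vaguely bounded, I would pass to the two projections $\mathfrak F_i:=\{\nu^i:\boldsymbol{\nu}\in\mathfrak F\}\subset\mathfrak M^+(A_i;D_i)$, $i=1,2$. By the very definition of $\mathbf A$-vague boundedness, $\sup_{\mu\in\mathfrak F_i}\,|\mu(\varphi)|<\infty$ for every $\varphi\in\mathrm C_0(D_i)$, i.e.\ $\mathfrak F_i$ is vaguely bounded in $\mathfrak M(D_i)$. Invoking the classical result quoted above (see, e.g., \cite{B2}), the vague closure $\overline{\mathfrak F_i}$ of $\mathfrak F_i$, taken in $\mathfrak M(D_i)$, is vaguely compact. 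Since $A_i$ is closed in $D_i$, the set $\mathfrak M^+(A_i;D_i)$ is vaguely closed in $\mathfrak M(D_i)$ (as recalled in Section~\ref{sec-cond}), whence $\overline{\mathfrak F_i}\subset\mathfrak M^+(A_i;D_i)$; thus each $\overline{\mathfrak F_i}$ is a vaguely compact subset of the $i$-th factor of $\mathfrak M^+(\mathbf A)$.

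Finally, by Tychonoff's theorem the product $K:=\prod_{i=1,2}\overline{\mathfrak F_i}$ is compact in the $\mathbf A$-vague topology, and $K\subset\mathfrak M^+(\mathbf A)$. Because $\mathfrak F\subset\mathfrak F_1\times\mathfrak F_2\subset K$ and $K$ is closed in the Hausdorff space $\mathfrak M^+(\mathbf A)$ (being compact), the $\mathbf A$-vague closure of $\mathfrak F$ is contained in $K$ and hence is compact; that is, $\mathfrak F$ is $\mathbf A$-vaguely relatively compact. I do not expect a genuine obstacle here: the one nonroutine ingredient is the scalar statement that vague boundedness implies vague relative compactness, which is itself a form of the Banach--Alaoglu theorem for the dual pairing between $\mathrm C_0(D_i)$ and $\mathfrak M(D_i)$, combined with the uniform boundedness principle applied over an exhaustion of $D_i$ by compact sets; everything else is the formal product argument.
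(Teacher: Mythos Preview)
Your argument is correct and follows essentially the same route as the paper's own proof: both reduce to the scalar fact that a vaguely bounded part of $\mathfrak M^+(D_i)$ is vaguely relatively compact (the paper cites \cite[Chapter~III, Section~2, Proposition~9]{B2}) and then apply Tychonoff's theorem to the finite product. Your write-up simply makes explicit the intermediate steps---the projections $\mathfrak F_i$, the closedness of $\mathfrak M^+(A_i;D_i)$ in $\mathfrak M(D_i)$, and the containment $\mathfrak F\subset\overline{\mathfrak F_1}\times\overline{\mathfrak F_2}$---that the paper leaves implicit in its two-line proof.
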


\begin{proof} Since by~\cite[Chapter~III, Section~2, Proposition~9]{B2} any
vaguely bounded part of~$\mathfrak M^+(D_i)$ is vaguely relatively compact,
the lemma follows from Tychonoff's theorem on the product of compact
spaces (see, e.g.,~\cite[Chapter~5, Theorem~13]{K}).\end{proof}

\begin{lemma}\label{lemma-rel-comp} $\mathfrak M^+(\mathbf A,\leqslant\!\mathbf a,\mathbf g)$ is\/ $\mathbf A$-vaguely bounded and\/ $\mathbf A$-vaguely closed; hence, it is\/ $\mathbf A$-vaguely compact.\end{lemma}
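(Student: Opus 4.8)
The plan is to prove Lemma~\ref{lemma-rel-comp} in two parts: $\mathbf A$-vague boundedness and $\mathbf A$-vague closedness. Boundedness is essentially immediate from~(\ref{bbbound}). Indeed, fix $i\in\{1,2\}$ and $\varphi\in\mathrm C_0(D_i)$. Since $\varphi$ has compact support in~$D_i$ and $A_i$ is closed in~$D_i$, the mass bound $\nu^i(A_i)\leqslant a_ig_{i,\inf}^{-1}$ yields
\[|\nu^i(\varphi)|\leqslant\nu^i(A_i)\cdot\sup_{x\in D_i}|\varphi(x)|\leqslant a_ig_{i,\inf}^{-1}\,\|\varphi\|_{\sup}<\infty,\]
a bound uniform over $\boldsymbol{\nu}\in\mathfrak M^+(\mathbf A,\leqslant\!\mathbf a,\mathbf g)$. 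Hence the set is $\mathbf A$-vaguely bounded.

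For closedness, I would work componentwise. It suffices to show that for each fixed $i$ the set $\{\nu^i:\boldsymbol{\nu}\in\mathfrak M^+(\mathbf A,\leqslant\!\mathbf a,\mathbf g)\}$, equivalently the set $\{\mu\in\mathfrak M^+(A_i;D_i):\langle g_i,\mu\rangle\leqslant a_i\}$, is vaguely closed in~$\mathfrak M(D_i)$; the product of these two closed sets is then $\mathbf A$-vaguely closed by definition of the product topology, and it already sits inside the $\mathbf A$-vaguely closed set $\mathfrak M^+(\mathbf A)$ (noted in the text after Definition~\ref{def-vague}). So let $\{\mu_k\}$ be a net in that component set converging vaguely to some $\mu\in\mathfrak M^+(A_i;D_i)$ (the limit lies there because $\mathfrak M^+(A_i;D_i)$ is vaguely closed, $A_i$ being closed in~$D_i$). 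The constraint $\langle g_i,\mu\rangle\leqslant a_i$ passes to the limit by vague lower semicontinuity of $\mu\mapsto\langle g_i,\mu\rangle$: since $g_i:D_i\to(0,\infty)$ is continuous and nonnegative, it belongs to $\mathrm\Phi(D_i)$ (with $D_i$ viewed as a locally compact space), so $\langle g_i,\mu\rangle\leqslant\liminf_k\langle g_i,\mu_k\rangle\leqslant a_i$ by the lower semicontinuity statement recalled at the start of Section~\ref{sec-Gauss}. This gives closedness.

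Finally, a set that is both relatively compact (by Lemma~\ref{lem:vaguecomp}, using the boundedness just shown) and closed is compact; this yields the last assertion. The only point requiring a little care is whether one may invoke the lower semicontinuity of $\mu\mapsto\langle g_i,\mu\rangle$ directly: the cited fact is stated for $\psi\in\mathrm\Phi(\mathrm X)$ and functions on $A_i$, but here $g_i$ is defined and continuous on all of~$D_i$, so one applies the semicontinuity on the locally compact space~$D_i$ and restricts; alternatively, since $\mu\in\mathfrak M^+(A_i;D_i)$ one has $\langle g_i,\mu\rangle=\langle g_i|_{A_i},\mu\rangle$ and $g_i|_{A_i}\in\mathrm\Phi(A_i)$ by~(\ref{infg}). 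I expect the boundedness to be routine; the mild obstacle is being careful that the semicontinuity argument is applied on the correct locally compact space and that the componentwise reduction is legitimate, but both are handled by the remarks already recorded after Definitions~\ref{def-vague} and in Section~\ref{sec-Gauss}.
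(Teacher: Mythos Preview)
Your proposal is correct and follows essentially the same approach as the paper: boundedness from the mass estimate~(\ref{bbbound}), closedness from the vague lower semicontinuity of $\mu\mapsto\langle g_i,\mu\rangle$ (since $g_i$ is positive and continuous), and then compactness via Lemma~\ref{lem:vaguecomp}. The only cosmetic differences are that you argue componentwise with nets while the paper works directly with sequences on the product and passes through an $\mathbf A$-vague cluster point; neither change affects the substance.
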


\begin{proof} Indeed, it is obvious from~(\ref{bbbound}) that $\mathfrak M^+(\mathbf A,\leqslant\!\mathbf a,\mathbf g)$ is $\mathbf A$-vaguely bounded. Fix  an arbitrary $\{\boldsymbol{\nu}_k\}_{k\in\mathbb N}\subset\mathfrak M^+(\mathbf A,\leqslant\!\mathbf a,\mathbf g)$; then, by Lemma~\ref{lem:vaguecomp}, it has an $\mathbf A$-va\-gue cluster point~$\boldsymbol{\nu}_0$. In fact, $\boldsymbol{\nu}_0\in\mathfrak M^+(\mathbf A)$, for $\mathfrak M^+(\mathbf A)$ is $\mathbf A$-vaguely closed. Choose a subsequence $\{\boldsymbol{\nu}_{k_m}\}_{m\in\mathbb N}$ of $\{\boldsymbol{\nu}_k\}_{k\in\mathbb N}$ that converges $\mathbf A$-vaguely to~$\boldsymbol{\nu}_0$. As $g_i$ is positive and continuous, we get
\[\langle g_i,\nu_0^i\rangle\leqslant\liminf_{m\to\infty}\,\langle g_i,\nu_{k_m}^i\rangle\leqslant a_i\quad\text{for all \ }i=1,2,\]
and the lemma follows.\end{proof}

\begin{lemma}\label{lemma-str-com}Assume that\/ $\mathbf A$ is a standard condenser; i.e., $\overline{A_1}\cap\overline{A_2}=\varnothing\vee\{\omega_{\mathbb R^n}\}$. Then the metric space\/ $\mathcal E^+_\alpha(\mathbf A)$ $\bigl({}=\breve{\mathcal E}^+_{\alpha}(\mathbf A)\bigr)$ is strongly complete. In more detail, any strong Cauchy sequence\/ $\{\boldsymbol{\nu}_k\}_{k\in\mathbb N}\subset\mathcal E^+_\alpha(\mathbf A)$ converges both strongly and\/ $\mathbf A$-vaguely to some\/ $\boldsymbol{\nu}_0\in\mathcal E^+_\alpha(\mathbf A)$, and this limit is unique.\end{lemma}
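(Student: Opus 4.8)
The plan is to carry the argument over to the scalar pre-Hilbert space $\mathcal E_\alpha(\mathbb R^n)$ through the isometry~(\ref{isom}), using the $\mathbf A$-vague topology and the fact that the plates of a standard condenser have no common limit point in $\mathbb R^n$ to compensate for the (known) strong incompleteness of $\mathcal E_\alpha(\mathbb R^n)$. Since $\mathbf A$ is standard, $\breve{\mathcal E}^+_\alpha(\mathbf A)=\mathcal E^+_\alpha(\mathbf A)$ by~(\ref{proper}), each component $\nu^i$ is a Radon measure on $\mathbb R^n$ concentrated on the closed set $A_i$, and $R\boldsymbol\nu=\nu^1-\nu^2$. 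Let $\{\boldsymbol\nu_k\}\subset\mathcal E^+_\alpha(\mathbf A)$ be strong Cauchy; by~(\ref{isom}), $\{R\boldsymbol\nu_k\}$ is strong Cauchy in $\mathcal E_\alpha(\mathbb R^n)$, so $M:=\sup_k\|R\boldsymbol\nu_k\|_\alpha<\infty$ and $\|R\boldsymbol\nu_k\|_\alpha$ converges.

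The first step — the one that replaces a (here unavailable) bound on the masses $\nu_k^i(A_i)$ — is to show that $\{\boldsymbol\nu_k\}$ is $\mathbf A$-vaguely bounded. Fix $i$ and $\varphi\in\mathrm C_0(D_i)$, $\varphi\geqslant0$, with (compact) support $K$. As $A_j$ ($j\neq i$) is closed and $D_i=(A_j)^c$, we have $\delta:={\rm dist}(K,A_j)>0$. Using $\alpha$-Riesz balayage onto $A_j$ (see~\cite{L}), applied to finitely many small pieces of $K$ and summed, I would construct a signed $\theta\in\mathcal E_\alpha(\mathbb R^n)$ with $U_\alpha^\theta\geqslant\varphi$ on $A_i$, $U_\alpha^\theta\geqslant0$ on $\mathbb R^n$ and $U_\alpha^\theta=0$ n.e.\ on $A_j$ (schematically $\theta=\rho-\rho'$, where $\rho\in\mathcal E^+_\alpha(\mathbb R^n)$ is concentrated in a compact neighbourhood of $K$ contained in $D_i$ and, depending on $\delta$, large enough on $K$, and $\rho'$ is its balayage onto $A_j$, so that $0\leqslant U_\alpha^\theta=U_\alpha^\rho-U_\alpha^{\rho'}$ and $U_\alpha^\theta=0$ n.e.\ on $A_j$). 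Since $\nu_k^i-\nu_k^j=s_iR\boldsymbol\nu_k$ and $\nu_k^j$, being a nonnegative measure of finite energy, charges no set of zero $C_\alpha$-capacity,
\[\langle\varphi,\nu_k^i\rangle\leqslant\langle U_\alpha^\theta,\nu_k^i\rangle=s_iE_\alpha(\theta,R\boldsymbol\nu_k)+\langle U_\alpha^\theta,\nu_k^j\rangle=s_iE_\alpha(\theta,R\boldsymbol\nu_k)\leqslant\|\theta\|_\alpha M\]
for every $k$, so $\sup_k\langle\varphi,\nu_k^i\rangle<\infty$. By Lemma~\ref{lem:vaguecomp}, $\{\boldsymbol\nu_k\}$ is then $\mathbf A$-vaguely relatively compact, and since $\mathfrak M^+(\mathbf A)$ is $\mathbf A$-vaguely closed, some subsequence $\{\boldsymbol\nu_{k_m}\}$ converges $\mathbf A$-vaguely to a measure $\boldsymbol\nu_0\in\mathfrak M^+(\mathbf A)$.

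It remains to identify $\boldsymbol\nu_0$ as the strong limit of the whole sequence, and here the standard-condenser hypothesis is essential: as $A_1$ and $A_2$ are closed in $\mathbb R^n$ and disjoint, their only possible common limit point is $\omega_{\mathbb R^n}$, so any mass drifting off to infinity along the $\boldsymbol\nu_k$ does so towards $\omega_{\mathbb R^n}$, where the $\alpha$-Riesz kernel degenerates. Exploiting this — along the lines of the consistency argument of~\cite{ZPot2} — one combines the vague convergences $\nu_{k_m}^i\otimes\nu_{k_m}^l\to\nu_0^i\otimes\nu_0^l$ with suitable (lower, resp.\ upper) estimates on $E_\alpha(\nu_{k_m}^i)$ and on the mutual energies $E_\alpha(\nu_{k_m}^1,\nu_{k_m}^2)$ to get $E_\alpha(R\boldsymbol\nu_0)\leqslant\liminf_m E_\alpha(R\boldsymbol\nu_{k_m})<\infty$, hence $\boldsymbol\nu_0\in\mathcal E^+_\alpha(\mathbf A)$, and, more precisely, $\|R\boldsymbol\nu_{k_m}\|_\alpha\to\|R\boldsymbol\nu_0\|_\alpha$ and $E_\alpha(R\boldsymbol\nu_{k_m},R\boldsymbol\nu_0)\to\|R\boldsymbol\nu_0\|_\alpha^2$; expanding $\|R\boldsymbol\nu_{k_m}-R\boldsymbol\nu_0\|_\alpha^2$ then yields $R\boldsymbol\nu_{k_m}\to R\boldsymbol\nu_0$ strongly. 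A strong Cauchy sequence with a strongly convergent subsequence converges, so $R\boldsymbol\nu_k\to R\boldsymbol\nu_0$ strongly, i.e.\ $\boldsymbol\nu_k\to\boldsymbol\nu_0$ strongly in $\mathcal E^+_\alpha(\mathbf A)$ by~(\ref{isom}); applying the argument to an arbitrary $\mathbf A$-vague cluster point of $\{\boldsymbol\nu_k\}$ shows it equals $\boldsymbol\nu_0$, whence $\boldsymbol\nu_k\to\boldsymbol\nu_0$ also $\mathbf A$-vaguely. Uniqueness follows at once: the $\mathbf A$-vague topology is Hausdorff, while any two strong limits have equal $R$-image by the strict positive definiteness of $\kappa_\alpha$, and $R$ is one-to-one.

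I expect the main obstacle to be this last step — the passage from $\mathbf A$-vague to strong convergence, i.e.\ the consistency of the two topologies along the Cauchy sequence. It cannot be shortcut by invoking completeness of $\mathcal E_\alpha(\mathbb R^n)$ (which fails, by Cartan's example), nor by reducing to the perfectness of $\kappa_\alpha$ on the cones $\mathcal E^+_\alpha(A_i;\mathbb R^n)$, since the component energies $\|\nu_k^i\|_\alpha$ need not stay bounded along a strong Cauchy sequence; the mutual-energy term $E_\alpha(\nu_{k_m}^1,\nu_{k_m}^2)$ has to be controlled directly, and it is precisely here that "touching only at $\omega_{\mathbb R^n}$", as opposed to at a finite point where the kernel blows up, is used.
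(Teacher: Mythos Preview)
The paper does not actually prove this lemma: it observes that $\mathcal E^+_\alpha(\mathbf A)=\breve{\mathcal E}^+_\alpha(\mathbf A)$ for a standard condenser by~(\ref{proper}), notes that the isometry~(\ref{isom}) identifies this space with its $R$-image in $\mathcal E_\alpha(\mathbb R^n)$, and then quotes the completeness of that image from~\cite{ZUmzh} (Theorem~1 and Corollary~1 there). What you are sketching is therefore an independent proof of the cited result, not of the lemma as the paper presents it.

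Your outline has the right shape, but there is a concrete gap in the first step. The construction $\theta=\rho-\rho'$ with $U_\alpha^\theta\geqslant0$ on all of~$\mathbb R^n$ rests on the domination principle $U_\alpha^{\rho'}\leqslant U_\alpha^\rho$, which for Riesz kernels holds only when $\alpha\leqslant2$. For $\alpha\in(2,n)$ the swept measure~$\rho'$ (defined, say, by orthogonal projection in the energy norm) still satisfies $U_\alpha^{\rho'}=U_\alpha^\rho$ n.e.\ on~$A_j$, but $U_\alpha^\theta$ may well be negative on $A_i\setminus K$; since you impose no mass bound on~$\nu_k^i$ there, the inequality $\langle\varphi,\nu_k^i\rangle\leqslant\langle U_\alpha^\theta,\nu_k^i\rangle$ is unjustified in that range, and the $\mathbf A$-vague boundedness argument breaks. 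The second step you already flag yourself: the claims $\|R\boldsymbol\nu_{k_m}\|_\alpha\to\|R\boldsymbol\nu_0\|_\alpha$ and $E_\alpha(R\boldsymbol\nu_{k_m},R\boldsymbol\nu_0)\to\|R\boldsymbol\nu_0\|_\alpha^2$ do not follow from vague convergence alone, and (as you note) cannot be reduced to perfectness on the cones $\mathcal E^+_\alpha(A_i;\mathbb R^n)$ because the component energies need not stay bounded. This is exactly the content of~\cite{ZUmzh}, and a self-contained proof would have to reproduce that argument rather than gesture at~\cite{ZPot2}.
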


\begin{proof} It is clear from (\ref{proper}) that, for a standard~$\mathbf A$,
\[\mathcal E^+_\alpha(\mathbf A)=\breve{\mathcal E}^+_{\alpha}(\mathbf A).\]
Since $\breve{\mathcal E}^+_\alpha(\mathbf A)$ and $R\bigl(\breve{\mathcal E}^+_\alpha(\mathbf A)\bigr)$, the latter being treated as a metric subspace of the pre-Hilbert space $\mathcal E_\alpha(\mathbb R^n)$, are isometric to each other by~(\ref{isom}), the lemma follows from~\cite{ZUmzh} (see~Theorem~1 and Corollary~1 therein).\end{proof}

\subsection{Proof of Theorem~\ref{complete}} Fix a strong Cauchy sequence $\{\boldsymbol{\nu}_k\}_{k\in\mathbb N}\subset\mathcal E^+_\alpha(\mathbf A,\leqslant\!\mathbf a,\mathbf g)$. According to Lemma~\ref{lemma-rel-comp},
it has an $\mathbf A$-vague cluster point $\boldsymbol{\nu}_0\in\mathfrak M^+(\mathbf A,\leqslant\!\mathbf a,\mathbf g)$.
Let $\{\boldsymbol{\nu}_{k_m}\}_{m\in\mathbb N}$ be a (strong Cauchy) subsequence of $\{\boldsymbol{\nu}_k\}_{k\in\mathbb N}$ that converges $\mathbf A$-vaguely to~$\boldsymbol{\nu}_0$, i.e.
\begin{equation}\label{conv-vag}\nu_{k_m}^i\to\nu_0^i\quad\text{vaguely in \ }\mathfrak M(D_i), \ i=1,2.\end{equation}

We proceed by showing that $E_\alpha(\boldsymbol{\nu}_0)$ is finite, so that
\begin{equation}\label{belongs}\boldsymbol{\nu}_0\in\mathcal E^+_\alpha(\mathbf A,\leqslant\!\mathbf a,\mathbf g)\quad\bigl({}\subset\breve{\mathcal E}^+_{\alpha}(\mathbf A)\bigr),\end{equation}
and, moreover, $\boldsymbol{\nu}_{k_m}\to\boldsymbol{\nu}_0$ strongly as $m\to\infty$, i.e.
\begin{equation}\label{conv-str}\lim_{m\to\infty}\,\|\boldsymbol{\nu}_{k_m}-\boldsymbol{\nu}_0\|_{\breve{\mathcal E}^+_{\alpha}(\mathbf A)}=0.\end{equation}
To establish these assertions, it is enough to analyze the case
\begin{equation}\label{ex}\overline{A_1}\cap\overline{A_2}=\{x_0\}\quad\text{where \ }x_0\in\mathbb R^n,\end{equation} since otherwise they are obtained directly from Lemma~\ref{lemma-str-com}.

Consider the inversion~$I$ with respect to the $(n-1)$-dim\-ensional unit sphere centered at~$x_0$; namely, each point $x\ne
x_0$ is mapped to the point~$x^*$ on the ray through~$x$ which
issues from~$x_0$, determined uniquely by
\[|x-x_0|\cdot|x^*-x_0|=1.\]
This is a one-to-one, bicontinuous mapping of $\mathbb
R^n\setminus\{x_0\}$ onto itself; furthermore,
\begin{equation}\label{inv}|x^*-y^*|=\frac{|x-y|}{|x_0-x||x_0-y|}.\end{equation}
Extend it to a one-to-one,
bicontinuous map of $\overline{\mathbb R^n}$ onto itself by setting $I(x_0)=\omega_{\mathbb R^n}$.

To each signed scalar measure $\nu\in\mathfrak M(\mathbb R^n)$ with
$\nu\bigl(\{x_0\}\bigr)=0$ there corresponds the Kelvin transform
$\nu^*\in\mathfrak M(\mathbb R^n)$ by means of the
formula
\[d\nu^*(x^*)=|x-x_0|^{\alpha-n}\,d\nu(x),\quad x^*\in\mathbb R^n\]
(see~\cite{R} or \cite[Chapter IV, Section 5, n$^\circ$\,19]{L}). Then, in view of~(\ref{inv}),
\begin{equation*}\label{KP}U_\alpha^{\nu^*}(x^*)=|x-x_0|^{n-\alpha}U_\alpha^{\nu}(x),\quad x^*\in\mathbb R^n,\end{equation*}
and therefore
\begin{equation}\label{K}E_\alpha(\nu^*)=E_\alpha(\nu).\end{equation}
It is clear that the Kelvin transformation is additive and it is an involution, i.e.
\begin{align}\label{A}\bigl(\nu_1+\nu_2\bigr)^*&=
\nu_1^*+\nu_2^*,\\
\label{AA}(\nu^*)^*&=\nu.\end{align}

Write $A_i^*:=I\bigl(\overline{A_i}\bigr)\cap\mathbb R^n$, $i=1,2$; then $\mathbf A^*=(A_1^*,A_2^*)$ forms a standard condenser in~$\mathbb R^n$, which is obvious from~(\ref{ex}) and the above-mentioned properties of~$I$.

Applying the Kelvin transformation to each of the components of any given $\boldsymbol{\nu}=(\nu^i)_{i=1,2}\in\breve{\mathfrak M}^+(\mathbf A)$, we get $\boldsymbol{\nu}^*:=\bigl((\nu^i)^*\bigr)_{i=1,2}\in\mathfrak M^+(\mathbf A^*)$; and the other way around.
Based on Lemma~\ref{l-Ren} and relations~(\ref{isom}) and (\ref{K})--(\ref{AA}), we also see that
the $\alpha$-Riesz energy of $\boldsymbol{\nu}\in\breve{\mathfrak M}^+(\mathbf A)$ is well defined if and only if so is that of~$\boldsymbol{\nu}^*$, and then they coincide; and, furthermore,
\begin{equation}\label{pres}\|\boldsymbol{\nu}_1^*-\boldsymbol{\nu}_2^*\|_{\mathcal E^+_\alpha(\mathbf A^*)}=\|\boldsymbol{\nu}_1-\boldsymbol{\nu}_2\|_{\breve{\mathcal E}^+_\alpha(\mathbf A)}\quad\text{for all \ }\boldsymbol{\nu}_1,\boldsymbol{\nu}_2\in\breve{\mathcal E}^+_\alpha(\mathbf A).\end{equation}
Summarizing what has thus been observed, we conclude that the Kelvin transformation is a one-to-one, isometric mapping of $\breve{\mathcal E}^+_\alpha(\mathbf A)$ onto~$\mathcal E^+_\alpha(\mathbf A^*)$.

Let $\boldsymbol{\nu}_{k_m}$, $m\in\mathbb N$, and~$\boldsymbol{\nu}_0$ be as above. In view of~(\ref{bbbound}) and~(\ref{conv-vag}), for each $i=1,2$ one can apply \cite[Lemma~4.3]{L} to
$\nu_{k_m}^i$, $k\in\mathbb N$, and~$\nu^i_0$, and consequently
\begin{equation}\label{vague'}\boldsymbol{\nu}_{k_m}^*\to\boldsymbol{\nu}_0^*\quad\text{$\mathbf A$-vaguely as \
}m\to\infty.\end{equation}
But $\bigl\{\boldsymbol{\nu}_{k_m}^*\bigr\}_{m\in\mathbb N}$ is a strong Cauchy sequence in $\mathcal E_\alpha^+(\mathbf A^*)$, which is clear from~(\ref{pres}). This together with~(\ref{vague'}) implies, by Lemma~\ref{lemma-str-com}, that $\boldsymbol{\nu}_0^*\in\mathcal E_\alpha^+(\mathbf A^*)$ and
\[\lim_{m\to\infty}\,\|\boldsymbol{\nu}_{k_m}^*-\boldsymbol{\nu}_0^*\|_{\mathcal E_\alpha^+(\mathbf A^*)}=0.\]
Repeated application of~(\ref{pres}) then leads to relations~(\ref{belongs}) and~(\ref{conv-str}) as claimed.

In turn, (\ref{conv-str}) yields $\boldsymbol{\nu}_k\to\boldsymbol{\nu}_0$ strongly as $k\to\infty$, for $\{\boldsymbol{\nu}_k\}_{k\in\mathbb N}$ is strongly fundamental.
It has thus been established that $\{\boldsymbol{\nu}_k\}_{k\in\mathbb N}$ converges strongly to any of its $\mathbf A$-vague cluster points. As $\|\boldsymbol{\nu}_1-\boldsymbol{\nu}_2\|_{\breve{\mathcal E}^+_{\alpha}(\mathbf A)}$ is a metric, $\boldsymbol{\nu}_0$ has to be the unique $\mathbf A$-vague cluster point of $\{\boldsymbol{\nu}_k\}_{k\in\mathbb N}$. Since the
$\mathbf A$-vague topology is Hausdorff, $\boldsymbol{\nu}_0$ is
actually also the $\mathbf A$-vague limit of~$\{\boldsymbol{\nu}_k\}_{k\in\mathbb N}$
(cf.~\cite[Chapter~I, Section~9, n$^\circ$\,1]{B1}). This completes the proof.\hfill$\square$

\section{Proof of Theorem~\ref{th-main}}\label{sec-proof-main}

We start by observing that $\mathcal E^{\boldsymbol{\sigma}}_{\alpha,\mathbf f}(\mathbf A,\mathbf a,\mathbf g)$ is nonempty and, hence, (\ref{gauss-finite}) holds.
Indeed, it is seen from assumptions~(\ref{g-mass}) and~(b$'$) in consequence of~\cite[Lemma~1.2.2]{F1} that, for every $i=1,2$, there is a compact set $K_i\subset A_i$ such that $\langle g_i,\sigma^i|_{K_i}\rangle>a_i$ and $f_i(x)\leqslant M<\infty$ for all $x\in K_i$. Define $\theta^i:=\sigma^i|_{K_i}\bigl/{\langle g_i,\sigma^i|_{K_i}\rangle}$. Due to assumption~(c$'$) and Lemma~\ref{l-Ren}, we then obtain $\boldsymbol{\theta}:=(\theta^i)_{i=1,2}\in\mathcal E^{\boldsymbol{\sigma}}_{\alpha,\mathbf f}(\mathbf A,\mathbf a,\mathbf g)$ as claimed.

Therefore, the class $\mathbb M^{\boldsymbol{\sigma}}_{\alpha,\mathbf f}(\mathbf A,\mathbf a,\mathbf g)$ of all $\{\boldsymbol{\nu}_k\}_{k\in\mathbb N}\subset\mathcal E^{\boldsymbol{\sigma}}_{\alpha,\mathbf f}(\mathbf A,\mathbf a,\mathbf g)$ with
\begin{equation}\label{min-seq}\lim_{k\to\infty}\,G_{\alpha,\mathbf f}(\boldsymbol{\nu}_k)=G_{\alpha,\mathbf f}^{\boldsymbol{\sigma}}(\mathbf A,\mathbf a,\mathbf g)\end{equation}
is nonempty. Fix arbitrary $\{\boldsymbol{\nu}_k\}_{k\in\mathbb N}$ and $\{\boldsymbol{\mu}_m\}_{m\in\mathbb N}$ in $\mathbb M^{\boldsymbol{\sigma}}_{\alpha,\mathbf f}(\mathbf A,\mathbf a,\mathbf g)$. Taking~(\ref{inlusions}) into account, we proceed by proving that
\begin{equation}
\lim_{k,m\to\infty}\,\|\boldsymbol{\nu}_k-\boldsymbol{\mu}_m\|_{\breve{\mathcal E}^+_\alpha(\mathbf A)}=0. \label{fund}
\end{equation}
Based on the convexity of $\mathcal E^{\boldsymbol{\sigma}}_{\alpha,\mathbf f}(\mathbf A,\mathbf a,\mathbf g)$, from~(\ref{Re}) and~(\ref{wen}) we get
\[4G^{\boldsymbol{\sigma}}_{\alpha,\mathbf f}(\mathbf A,\mathbf a,\mathbf g)\leqslant4G_{\alpha,\mathbf f}\Bigl(\frac{\boldsymbol{\nu}_k+\boldsymbol{\mu}_m}{2}\Bigr)=
\|R\boldsymbol{\nu}_k+R\boldsymbol{\mu}_m\|^2_\alpha+
4\langle\mathbf f,\boldsymbol{\nu}_k+\boldsymbol{\mu}_m\rangle.\]
On
the other hand, applying the parallelogram identity in the
pre-Hilbert space $\mathcal E_\alpha(\mathbb R^n)$ to $R\boldsymbol{\nu}_k$
and $R\boldsymbol{\mu}_m$ and then adding and subtracting
$4\langle\mathbf{f},\boldsymbol{\nu}_k+\boldsymbol{\mu}_m\rangle$, we have
\[\|R\boldsymbol{\nu}_k-R\boldsymbol{\mu}_m\|^2_\alpha=
-\|R\boldsymbol{\nu}_k+R\boldsymbol{\mu}_m\|^2_\alpha-
4\langle\mathbf f,\boldsymbol{\nu}_k+\boldsymbol{\mu}_m\rangle
+2G_{\alpha,\mathbf f}(\boldsymbol{\nu}_k)+2G_{\alpha,\mathbf f}(\boldsymbol{\mu}_m).\]
When combined with the preceding relation, this gives
\begin{equation*}0\leqslant\|R\boldsymbol{\nu}_k-R\boldsymbol{\mu}_m\|^2_\alpha\leqslant-
4G^{\boldsymbol{\sigma}}_{\alpha,\mathbf f}(\mathbf A,\mathbf a,\mathbf g)+2G_{\alpha,\mathbf f}(\boldsymbol{\nu}_k)+2G_{\alpha,\mathbf f}(\boldsymbol{\mu}_m).\end{equation*}
On account of (\ref{isom}), (\ref{min-seq}) and the fact that $G_{\alpha,\mathbf f}^{\boldsymbol{\sigma}}(\mathbf A,\mathbf a,\mathbf g)$ is finite, we derive~(\ref{fund}) from the very relation by letting $k,m\to\infty$.

Assuming now $\{\boldsymbol{\nu}_k\}_{k\in\mathbb N}$ and $\{\boldsymbol{\mu}_m\}_{m\in\mathbb N}$ in~(\ref{fund}) to be equal, we see that any fixed sequence $\{\boldsymbol{\nu}_k\}_{k\in\mathbb N}\in\mathbb M^{\boldsymbol{\sigma}}_{\alpha,\mathbf f}(\mathbf A,\mathbf a,\mathbf g)$ is strongly fundamental in the metric space $\mathcal
E^+_\alpha(\mathbf A,\leqslant\!\mathbf a,\mathbf g)$. Thus, by Theorem~\ref{complete}, there exists the unique $\boldsymbol{\nu}_0\in\mathcal E^+_\alpha(\mathbf A,\leqslant\!\mathbf a,\mathbf g)$ such that
\begin{equation}\label{starr}\boldsymbol{\nu}_k\to\boldsymbol{\nu}_0\mbox{ \ $\mathbf A$-vaguely (as $k\to\infty$)},\end{equation}
\begin{equation}\label{starr1}\lim_{k\to\infty}\,\|\boldsymbol{\nu}_k-\boldsymbol{\nu}_0\|_{\breve{\mathcal E}^+_\alpha(\mathbf A)}=0.\end{equation}
We assert that this $\boldsymbol{\nu}_0$ gives a solution to Problem~\ref{pr2}, i.e.
\begin{equation}\label{solution}\boldsymbol{\nu}_0\in\mathcal E^{\boldsymbol{\sigma}}_{\alpha,\mathbf f}(\mathbf A,\mathbf a,\mathbf g)\text{ \ and \ }
G_{\alpha,\mathbf f}(\boldsymbol{\nu}_0)=G^{\boldsymbol{\sigma}}_{\alpha,\mathbf f}(\mathbf A,\mathbf a,\mathbf g).\end{equation}

Observe that
\begin{equation*}\label{str-vag}G_{\alpha,\mathbf f}(\boldsymbol{\nu}_0)\leqslant\liminf_{k\to\infty}\,G_{\alpha,\mathbf f}(\boldsymbol{\nu}_k).\end{equation*}
Indeed, if Case I holds, then this inequality can be obtained directly from~(\ref{starr}) and~(\ref{starr1}), while otherwise it follows from~(\ref{starr1}) with the help of~(\ref{C2}). Combining it with~(\ref{min-seq}) and~(\ref{gauss-finite}), we get
$G_{\alpha,\mathbf f}(\boldsymbol{\nu}_0)\leqslant G^{\boldsymbol{\sigma}}_{\alpha,\mathbf f}(\mathbf A,\mathbf a,\mathbf g)<\infty$.

As $\mathfrak M^{\boldsymbol\sigma}(\mathbf A)$ is $\mathbf A$-vaguely closed, we therefore conclude that
relation~(\ref{solution}) will have been established once for each $i=1,2$ we show
\begin{equation}\label{g}\langle g_i,\nu_0^i\rangle=a_i.\end{equation}
Consider an exhaustion of $A_i$ by an increasing sequence of compact sets $K_\ell\subset A_i$, $\ell\in\mathbb N$.
In view of the positivity and continuity of~$g_i$ on~$A_i$, from~(\ref{starr}) and \cite[Lemma~1.2.2]{F1} we get
\begin{align*}a_i&\geqslant\langle g_i,\nu_0^i\rangle=\lim_{\ell\to\infty}\,\bigl\langle g_i\chi_{K_\ell},\nu_0^i\bigr\rangle\geqslant\lim_{\ell\to\infty}\,\limsup_{k\to\infty}\,\bigl\langle g_i\chi_{K_\ell},\nu_{k}^i\bigr\rangle\\&{}
=a_i-\lim_{\ell\to\infty}\,\liminf_{k\to\infty}\,\bigl\langle g_i\chi_{A_i\setminus K_\ell},\nu_{k}^i\bigr\rangle.\end{align*}
Hence, to prove (\ref{g}), it is enough to verify the relation
\begin{equation}\label{g0}\lim_{\ell\to\infty}\,\liminf_{k\to\infty}\,\bigl\langle g_i\chi_{A_i\setminus K_\ell},\nu_{k}^i\bigr\rangle=0.\end{equation}
Since, by~(d$'$), \[\infty>\langle g_i,\sigma^i\rangle=\lim_{\ell\to\infty}\,\bigl\langle g_i\chi_{K_\ell},\sigma^i\bigr\rangle,\] we have
\[\lim_{\ell\to\infty}\,\bigl\langle g_i\chi_{A_i\setminus K_\ell},\sigma^i\bigr\rangle=0.\]
When combined with
\[\bigl\langle g_i\chi_{A_i\setminus K_\ell},\nu_{k}^i\bigr\rangle\leqslant\bigl\langle g_i\chi_{A_i\setminus K_\ell},\sigma^i\bigr\rangle\quad\text{for all \ }\ell,k\in\mathbb N,\]
this implies (\ref{g0}), hence (\ref{g}), and consequently~(\ref{solution}).

It is left to establish the statement on the uniqueness. Let, on the contrary, $\widehat{\boldsymbol{\nu}}_0$ be an other solution of Problem~\ref{pr2}. Then trivial sequences $\{\boldsymbol{\nu}_0\}$ and $\{\widehat{\boldsymbol{\nu}}_0\}$ are both elements of $\mathbb M^{\boldsymbol{\sigma}}_{\alpha,\mathbf f}(\mathbf A,\mathbf a,\mathbf g)$ and therefore, by~(\ref{fund}), $\|\boldsymbol{\nu}_0-\widehat{\boldsymbol{\nu}}_0\|_{\breve{\mathcal E}^+_\alpha(\mathbf A)}=0$. As $\breve{\mathcal E}^+_\alpha(\mathbf A)$ is a metric space, this results in $\boldsymbol{\nu}_0=\widehat{\boldsymbol{\nu}}_0$, and the proof is complete.\hfill$\square$

\vspace{5mm}


\end{document}